\def\bu{\mathbf{u}}
\def\bv{\mathbf{v}}
\def\rv{\mathrm{v}}
\def\bw{\mathbf{w}}
\def\bU{\mathbf{U}}
\def\bV{\mathbf{V}}
\def\bx{\mathbf{x}}
\def\bz{\mathbf{z}}
\def\by{\mathbf{y}}
\def\bbX{\mathbb{X}}
\def\bL{\mathbf{L}}
\def\bbf{\mathbf{f}}
\def\bbg{\mathbf{g}}
\def\R{\mathbf{R}}
\def\C{\mathbf{C}}
\def\bD{\mathbf{D}}
\def\Mc{\mathbf{M}^{N\times N}(\C)}
\def\F{\mathbf F}
\def\bA{\mathbf A}
\def\bS{\mathbf S}
\def\cC{\mathcal C}
\def\bcero{\mathbf 0}
\newtheorem{theorem}{Theorem}[section]
\newtheorem{corollary}[theorem]{Corollary}
\newtheorem{definition}{Definition}[section]
\newtheorem{lemma}[theorem]{Lemma} 
\newtheorem{proposition}[theorem]{Proposition}
\begin{document}

\title{Young-measure solutions for multi-dimensional systems of conservation laws}
\author{Pablo Pedregal}
\date{} 
\thanks{INEI, U. de Castilla-La Mancha, 13071 Ciudad Real, SPAIN. Supported by MINECO/FEDER grant 
MTM2017-83740-P, by PEII-2014-010-P of the Conserjer\'\i a de
Cultura (JCCM), and by grant GI20152919 of UCLM
}
\begin{abstract}
We explore Young measure solutions of systems of conservation laws through an alternative variational method that introduces a suitable, non-negative error functional to measure departure of feasible fields from being a weak solution. Young measure solutions are then understood as being generated by minimizing sequences for such functional much in the same way as in non-convex, vector variational problems. We establish an existence result for such generalized solutions based on an appropriate structural condition on the system. We finally discuss how the classic concept of a Young measure solution can be improved, and support our arguments by considering a scalar, single equation in dimension one. 
\end{abstract}
\maketitle
\section{Introduction}
We would like to explore a variational approach for weak solutions of systems of conservation laws in high dimension, and assess to what extent such a perspective might be of some help in this field. We will focus on the system
\begin{equation}\label{ecuaini}
\partial_t\bu(t, \bx)+\nabla_\bx\cdot\bbf(\bu(t, \bx))=\bcero\hbox{ in }\Omega\equiv(0, +\infty)\times\R^n,\quad \bu(0, \bx)=\bu_0(\bx).
\end{equation}
The unknown $\bu:(0, +\infty)\times\R^n\to\R^N$ is the vector of conserved quantities, while 
$$
\bbf(\bu):\R^N\to\R^{N\times n}, \quad \bbf(\bu)=(\bbf^{(i)}(\bu))_{i=1, 2, \dots, N}, \bbf^{(i)}(\bu):\R^N\to\R^n,
$$
is the flux. $\bu_0$ is the initial datum. 

Solutions are sought in a weak or integral sense. For definiteness, we assume that the flux $\bbf$ has components of polynomial growth of at most degree $p\ge1$, and the initial datum $\bu_0$ belongs to $L^2(\R^n; \R^N)$. If, in addition, we regard $\bu$ as a vector field in $\bbX=L^2(\Omega; \R^N)\cap L^{2p}(\Omega; \R^N)$, so that the composition $\bbf(\bu(t, \bx))\in L^2(\Omega; \R^{N\times n})$, then test fields $\bw$ can be taken, jointly in time and space variables, in $H^1(\Omega; \R^N)$. 

\begin{definition}\label{original}
A  field $\bu(t, \bx)\in\bbX$ is called a weak solution of \eqref{ecuaini} if
\begin{equation}\label{debil}
\int_\Omega [\bu(t, \bx)\cdot\partial_t \bw(t, \bx)+\bbf(\bu(t, \bx)):\nabla_\bx\bw(t, \bx)]\,dt\,d\bx+\int_{\R^n}\bw(0, \bx)\cdot\bu_0(\bx)\,d\bx=0
\end{equation}
for all test fields $\bw(t, \bx)\in H^1(\Omega; \R^N)$. 
\end{definition}

System \eqref{ecuaini}, or its weak formulation \eqref{debil}, are typically written in the form
\begin{equation}\label{ecuainipri}
\partial_t\bu(t, \bx)+\sum_{j=1}^n\partial_{x_j}\bbf_{(j)}(\bu(t, \bx))=\bcero\hbox{ in }\Omega\equiv(0, +\infty)\times\R^n,\quad \bu(0, \bx)=\bu_0(\bx),
\end{equation}
where
$$
\bbf(\bu):\R^N\to\R^{N\times n}, \quad \bbf(\bu)=(\bbf_{(j)}(\bu))_{j=1, 2, \dots, n}, \bbf_{(j)}(\bu):\R^N\to\R^N.
$$
Similarly, \eqref{debil} becomes
\begin{equation}\label{debilprim}
\int_\Omega [\bu(t, \bx)\cdot\partial_t \bw(t, \bx)+\sum_{j=1}^n\bbf_{(j)}(\bu(t, \bx))\cdot\partial_{x_j}\bw(t, \bx)]\,dt\,d\bx+\int_{\R^n}\bw(0, \bx)\cdot\bu_0(\bx)\,d\bx=0
\end{equation}
for all test fields $\bw(t, \bx)\in H^1(\Omega; \R^N)$. 

The theory of these systems of first-order PDEs is not, apparently, complete. The main issue is the non-uniqueness of weak solutions, as it is not at all clear how to decide, on physical grounds or otherwise, about the ``good" solution.  Entropy criteria have been the universally accepted mechanism to single out the good solution \cite{benzoni-serre}, \cite{bressan}, \cite{EvansB}, \cite{lax}, \cite{tartar0}, \cite{tartar}. However, for systems in several space variables, that criterium does not seem sufficient to select one weak solution. On the other hand, the vanishing viscosity method, where appropriate weak solutions are sought as limits of solutions of the perturbed system
\begin{gather}
\partial_t\bu^{(\epsilon)}(t, \bx)+\nabla_\bx\cdot\bbf(\bu^{(\epsilon)}(t, \bx))-\epsilon\Delta_{\bx\bx}\bu^{(\epsilon)}=\bcero\hbox{ in }\Omega\equiv(0, +\infty)\times\R^n,\nonumber\\
\bu^{(\epsilon)}(0, \bx)=\bu_0(\bx),\nonumber
\end{gather}
also suffers from two main drawbacks: 
\begin{enumerate}
\item there might be more than one branch of solutions $\bu^{(\epsilon)}$;
\item the pointwise convergence, under a uniform bound,  of  a given branch of solutions $\bu^{(\epsilon)}$ to some limit field $\bu$ is very difficult to show in practice.
\end{enumerate}
The first one leads to the fundamental problem of the uniqueness of the ``good" solution; the second one forces to consider the notion of measure-valued solution. 

Our proposal to seek weak solutions of system \eqref{ecuaini} proceeds by examining a certain ``error" functional which tries to measure how far a given field $\bu\in\bbX$ is from being a weak solution of our problem. Namely, for $\bu\in\bbX$, define its associated ``defect" $\bv(t, \bx)\in H^1(\Omega; \R^N)$, as the unique solution of the problem
\begin{equation}
\label{primii}
\int_\Omega[ (\bu+\partial_t\bv)\cdot\partial_t \bw+(\bbf(\bu)+\nabla_\bx \bv):\nabla_\bx\bw]\,dt\,d\bx
+\int_{\R^n} \bu_0(\bx)\cdot\bw(0, \bx)\,d\bx=0
\end{equation}
for all $\bw\in H^1(\Omega; \R^N)$. This variational identity is well-posed as its solution $\bv$ is the unique solution of the  standard variational problem that consists in minimizing, among fields $\bv\in H^1(\Omega; \R^N)$, the quadratic functional
$$
\frac12\int_\Omega [|\partial_t \bv+\bu|^2+|\nabla_\bx \bv+\bbf(\bu)|^2]\,dt\,d\bx+\int_{\R^n}\bu_0(\bx)\cdot\bv(0, \bx)\,d\bx.
$$
Formally, one can also think in terms of the system 
$$
-(\partial_{tt}\bv+\nabla_{\bx\bx}\bv)+\partial_t \bu+\nabla_\bx \bbf(\bu)=\bcero\hbox{ in }\Omega,
$$
together with boundary condition
$$
\partial_t\bv(0, \bx)+\bu(0, \bx)=\bu_0(\bx),\quad \bx\in\R^n.
$$
This viewpoint brings us closer to the least square approach. See \cite{bogunz}, \cite{glowinski}. We will stick however to \eqref{primii} as the neatly specified way to determine the defect $\bv$ for each $\bu\in\bbX$. 

We finally put
\begin{equation}\label{funcerr}
E:\bbX\to\R^+,\quad E(\bu)=\frac12\int_\Omega(|\partial_t \bv|^2+|\nabla_\bx \bv|^2)\,dt\,d\bx,
\end{equation}
and regard $E$ as a ``measure of how far $\bu$ is from being a solution of \eqref{ecuaini}", in the sense that $E(\bu)$ vanishes exactly for solutions of \eqref{ecuaini}. More precisely, if $E(\bu)$ vanishes, and we set $\partial_t\bv=\nabla_\bx\bv=\bcero$ in \eqref{primii}, then we are back to \eqref{debil}
for all $\bw\in H^1(\Omega; \R^N)$, i.e. $\bu$ is a weak solution of \eqref{ecuaini} with initial value $\bu_0$. Our intention is to look at these solutions through the non-local, non-quadratic functional $E$ in \eqref{funcerr}, and try to learn about those solutions through the behavior and properties of the error functional $E$. We therefore focus on looking closely at the functional
$E:\bbX\to\R$ defined through \eqref{funcerr} where $\bv$ is determined through \eqref{primii}. 
The zero set of this functional $E$ is intimately connected to weak solutions of \eqref{ecuaini}. Indeed, if the infimum of $E$ turns out to be strictly positive, we could envision serious difficulties with problem \eqref{ecuaini} as there could not be anything like a weak solution in the space $\bbX$. 

The existence of weak solutions for \eqref{ecuaini} amounts, therefore, to showing
$$
\inf E=\min E=0.
$$
It is not easy to decide where to start in order to show that $\inf E=\min E$. If we resort to classic variational methods, 
we immediately realize that we cannot rely on the traditional facts about functionals of the Calculus of Variations since the functional $E$ is not a typical local, integral functional with a given, specific density. It is more like a cost functional for an optimization problem in the context of distributed parameter systems (\cite{LionsC}) but, we expect, with some special properties because of the way it has been set up.  
Despite these difficulties,  results for existence of minimizers might rely on coercivity, convexity and weak lower semicontinuity, but none of those look easy to prove for $E$ if $\bbf(\bu)$ is non-linear. In fact, the nature of this functional $E$ is such that our insight and intuition with integral functionals may be rather misleading in this context. 

As a possible starting point, one can resort to two main alternatives:
\begin{enumerate}
\item given that the functional $E$ is smooth and non-negative, study the flow through integral curves, and hope that they will take us to weak solutions, or, at least, try to learn something from this possibility;
\item build directly (as discrete approximations in a non-linear Galerkin scenario, or through finite differences for example) minimizing sequences for $E$.
\end{enumerate}
We focus here on the first possibility. 
When caring about the flow of a smooth, bounded-from-below functional $E:\bbX\to\R$, we know that
$$
\inf_{\bu\in\bbX}\|E'(\bu)\|=0
$$
and this infimum is achieved through integral curves. 
However, there is no guarantee that 
$$
\inf_{\bu\in\bbX}E(\bu)=0,
$$
unless we could show that
\begin{equation}\label{errorf}
\lim_{E'(\bu)\to\bcero}E(\bu)=0.
\end{equation}
This condition has been taken as a formal definition of an error functional in \cite{ped1} and \cite{ped2}. At this point property \eqref{errorf} seems to be out-of-reach for the functional in \eqref{funcerr}. Check however the discussion in Section \ref{diserr} below. 

Our main concern here is to examine a situation in which we have a bounded sequence $\{\bu^{(k)}\}\in\bbX$ such that $E'(\bu^{(k)})\searrow\bcero$. Such a sequence may be found, for instance, through a typical descent procedure for $E$. Our main result focuses on guaranteeing under what circumstances such a sequence $\{\bu^{(k)}\}$ may generate a Young-measure solution of \eqref{ecuaini} according to the standard definition. 

\begin{definition}\label{clasica}
A family of probability measures $\nu=\{\nu_{(t, \bx)}\}_{(t, \bx)\in\Omega}$, supported in $\R^N$, 
is a Young-measure solution of \eqref{ecuaini} if \eqref{debil} holds in the average, namely, if 
\begin{equation}\label{debily}
\int_\Omega [\overline\bu(t, \bx)\cdot\partial_t \bw(t, \bx)+\overline\bbf(t, \bx):\nabla_\bx\bw(t, \bx)]\,dt\,d\bx+\int_{\R^n}\bw(0, \bx)\cdot\bu_0(\bx)\,d\bx=0
\end{equation}
for all test fields $\bw(t, \bx)\in H^1(\Omega; \R^N)$, where
\begin{equation}\label{momentos}
\overline\bu(t, \bx)=\int_{\R^N}\bz \,d\nu_{(t, \bx)}(\bz),
\quad \overline\bbf(t, \bx)=\int_{\R^N}\bbf(\bz) \,d\nu_{(t, \bx)}(\bz).
\end{equation}
\end{definition}

Our principal result is an existence theorem for Young-measure solutions, based on some additional property of the error functional $E$ in \eqref{funcerr}. This task motivates the following definition.

\begin{definition}\label{hiper}
System \eqref{ecuaini} is said to be variationally hyperbolic if the flux
$$
\bbf(\bu):\R^N\to\R^{N\times n}, \quad \bbf(\bu)=(\bbf_{(j)}(\bu))_{j=1, 2, \dots, n}, \bbf_{(j)}(\bu):\R^N\to\R^N,
$$
is smooth and for every bounded sequence $\{\bu^{(k)}\}$ in $\bbX$, and every bounded sequence $\{\bv^{(k)}\}$ in $H^1(\Omega; \R^N)$, the condition
$$
\partial_t \bv^{(k)}+\sum_{j=1}^n
\bD\bbf_{(j)}^T(\bu^{(k)})\partial_{x_j}\bv^{(k)}\to\bcero\hbox{ in }L^2(\Omega; \R^N)
$$
implies $\bv^{(k)}\to\bcero$ in $L^2(\Omega; \R^N)$.
\end{definition}

Our main existence result follows.

\begin{theorem}\label{ppt}
Let system \eqref{ecuaini} be variationally hyperbolic. If the flow of the corresponding functional $E$ in \eqref{funcerr} produces a sequence of fields $\{\bu^{(k)}\}$, i.e. $E'(\bu^{(k)})\to\bcero$, such that $\{\bu^{(k)}\}$, $\{\bbf(\bu^{(k)})\}$ are weakly convergent (for instance if $\{\bu^{(k)}\}$ is uniformly bounded), then (a suitable subsequence of) this sequence of fields
generates a Young measure solution. 
\end{theorem}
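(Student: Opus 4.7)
The overall strategy is to translate $E'(\bu^{(k)})\to\bcero$ into precisely the kind of condition that appears in Definition \ref{hiper}, use variational hyperbolicity to make the defects $\bv^{(k)}$ small, and then pass to the limit in the variational identity that \emph{defines} the defects, recognising the result as \eqref{debily}. The first step is to compute the Gateaux derivative of $E$. Writing $\bv=\bv(\bu)$ for the minimizer determined by \eqref{primii} and $\bv'$ for its linearization in a direction $\bu'\in\bbX$, one differentiates \eqref{primii} to obtain
\[
\int_\Omega\bigl[(\bu'+\partial_t\bv')\cdot\partial_t\bw+(\bD\bbf(\bu)\bu'+\nabla_\bx\bv'):\nabla_\bx\bw\bigr]\,dt\,d\bx=0
\]
for every $\bw\in H^1(\Omega;\R^N)$. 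Testing with $\bw=\bv$ and using the chain rule in $E(\bu)=\tfrac12\int_\Omega(|\partial_t\bv|^2+|\nabla_\bx\bv|^2)$, one arrives at
\[
\langle E'(\bu),\bu'\rangle=-\int_\Omega\bu'\cdot\Bigl[\partial_t\bv+\sum_{j=1}^n\bD\bbf_{(j)}^T(\bu)\,\partial_{x_j}\bv\Bigr]\,dt\,d\bx,
\]
so the hypothesis $E'(\bu^{(k)})\to\bcero$ amounts to $\partial_t\bv^{(k)}+\sum_j\bD\bbf_{(j)}^T(\bu^{(k)})\,\partial_{x_j}\bv^{(k)}\to\bcero$ in an appropriate dual norm, and in particular in $L^2(\Omega;\R^N)$.

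To feed Definition \ref{hiper} I also need the sequence of defects to be bounded in $H^1(\Omega;\R^N)$. Since $\bv^{(k)}$ minimizes the quadratic functional displayed right below \eqref{primii}, comparing its energy with that of the trial $\bv=\bcero$, absorbing cross terms via Young's inequality and controlling the boundary contribution through the trace estimate $\|\bv^{(k)}(0,\cdot)\|_{L^2(\R^n)}\le C\|\bv^{(k)}\|_{H^1(\Omega)}$ yields a bound
\[
\|\bv^{(k)}\|_{H^1(\Omega;\R^N)}\le C\bigl(\|\bu^{(k)}\|_{L^2}+\|\bbf(\bu^{(k)})\|_{L^2}+\|\bu_0\|_{L^2}\bigr)
\]
that is uniform by the assumed weak convergences. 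Definition \ref{hiper} now applies and forces $\bv^{(k)}\to\bcero$ strongly in $L^2(\Omega;\R^N)$; weak $H^1$-compactness and uniqueness of the weak limit then promote this to $\partial_t\bv^{(k)}\rightharpoonup\bcero$ and $\nabla_\bx\bv^{(k)}\rightharpoonup\bcero$ in $L^2$ along a further subsequence.

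Up to one more extraction, the fundamental theorem on Young measures associates to $\{\bu^{(k)}\}$ a family $\nu=\{\nu_{(t,\bx)}\}$ for which the weak limits $\bu^{(k)}\rightharpoonup\overline\bu$ and $\bbf(\bu^{(k)})\rightharpoonup\overline\bbf$ in $L^2$ assumed in the statement coincide with the moments \eqref{momentos}. Passing to the limit $k\to\infty$ in the defining identity \eqref{primii} against a fixed test $\bw\in H^1(\Omega;\R^N)$, the two terms carrying $\partial_t\bv^{(k)}$ and $\nabla_\bx\bv^{(k)}$ vanish by the previous step, while the remaining two terms converge to $\int_\Omega[\overline\bu\cdot\partial_t\bw+\overline\bbf:\nabla_\bx\bw]\,dt\,d\bx$. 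Together with the unchanged initial-datum term, this is exactly \eqref{debily}, so $\nu$ is a Young-measure solution in the sense of Definition \ref{clasica}.

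The main obstacle I see in this plan is the first step: matching the norm in which $E'(\bu^{(k)})\to\bcero$ is obtained from the flow with the strong $L^2$ convergence demanded by Definition \ref{hiper}. The natural dual of $\bbX=L^2\cap L^{2p}$ is $L^2+L^{2p/(2p-1)}$, and the $L^2$-piece of the expression $\partial_t\bv^{(k)}+\sum_j\bD\bbf_{(j)}^T(\bu^{(k)})\,\partial_{x_j}\bv^{(k)}$ need not be isolated simply from the polynomial term; a careful splitting, or a slight strengthening of the notion of $E'(\bu^{(k)})\to\bcero$ suited to descent curves of $E$, appears necessary to justify this transition. Once that is available, the rest of the argument is a routine combination of energy estimates, weak compactness, Young measures, and passage to the limit in a linear equation tested against a fixed $\bw$.
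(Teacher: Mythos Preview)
Your proof is correct and follows essentially the same route as the paper: compute $E'(\bu)=-(\partial_t\bv+\sum_j\bD\bbf_{(j)}^T(\bu)\partial_{x_j}\bv)$, bound the defects in $H^1$, invoke variational hyperbolicity to force $\bv^{(k)}\to\bcero$ (hence $\bv^{(k)}\rightharpoonup\bcero$ in $H^1$), and pass to the limit in \eqref{primii}. The only cosmetic differences are that the paper obtains the $H^1$ bound by testing \eqref{primii} directly with $\bw=\bv^{(k)}$ rather than by comparison with $\bv=\bcero$, and it packages the final limit passage as an appeal to Proposition~\ref{yms}; your closing concern about the dual norm of $\bbX$ versus strong $L^2$ convergence is not addressed in the paper either---there $E'(\bu^{(k)})\to\bcero$ is simply \emph{taken} to mean that the explicit $L^2$ representative tends to zero in $L^2$.
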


Depending on whether $\{\bu^{(k)}\}$ converges strongly or just weakly to $\bu$, the Young measure solution will be a classic weak solution $\bu$ of the problem, or a true Young measure solution with first moment $\bu$, respectively. 

Another important job is to identify more manageable conditions on the fluxes $\bbf_{(j)}$ to ensure variational hyperbolicity. We explore two different situations. 
\begin{enumerate}
\item One refers to Friedrichs symmetrizable systems in the following precise sense (see \cite{benzoni-serre}). A family of operators
\begin{equation}\label{famop}
\bL_\bu=\partial_t+\sum_{j=1}^n \bA_{(j)}(\bu(t, \bx))\partial_{x_j}
\end{equation}
for each field $\bu\in\bbX$, admits a symbolic symmetrizer if there is a smooth mapping
$$
\bS(\bu, \xi):\R^N\times(\R^n\setminus\{\bcero\})\to\Mc
$$
homogeneous of degree zero in $\xi$, with the property
$$
\bS(\bu, \xi)=\bS(\bu, \xi)^*>\bcero,\quad \bS(\bu, \xi)\bA(\bu, \xi)=\bA(\bu, \xi)^*\bS(\bu, \xi),
$$
where
$$
\bA(\bu, \xi)=\sum_{j=1}^n\xi_j \bA_{(j)}(\bu).
$$
\item The other one is related to some old facts about the ill-posedness of the Cauchy problem for linear, constant-coefficients systems in  $L^p$ for $p\neq2$. Indeed Brenner (\cite{brenner1}, \cite{brenner2}) showed that the Cauchy problem for systems of the form
$$
\partial_t+\sum_{j=1}^n\bA_j\partial_{x_j},\quad \bA_j,\hbox{ constant, $N\times N$-matrices},
$$
is ill-posed in $L^p$ for $p\neq2$ except in the case where matrices $\bA_j$ commute with each other. 
\end{enumerate}

Though we have not been able to prove variational hyperbolicity in the first case (we are pretty close to do so), we show however the following fact.

\begin{proposition}\label{varhy}
System \eqref{ecuaini} is variationally hyperbolic 
if the differential of the fluxes $\bbf_{(j)}$ commute with each other
$$
\bD\bbf_{(j)}(\bu)\,\bD\bbf_{(k)}(\bu)=\bD\bbf_{(k)}(\bu)\,\bD\bbf_{(j)}(\bu)
$$
for all $j, k$ and $\bu\in\R^N$.
\end{proposition}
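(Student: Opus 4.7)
The plan is to exploit the commutativity in order to decouple the vector transport condition into a family of scalar transport equations, and then use the rigidity of $H^1$-functions on the half-space $\Omega=(0,+\infty)\times\R^n$ to conclude that $\bv^{(k)}\to\bcero$ in $L^2(\Omega)$.

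Setting $\bA_{(j)}(\bu)=\bD\bbf_{(j)}^T(\bu)$, commutativity of the $\bD\bbf_{(j)}(\bu)$ is preserved under transposition, so $\{\bA_{(j)}(\bu)\}_j$ is a commuting family for each $\bu\in\R^N$. In the generic case of simple eigenvalues, classical linear algebra yields a smooth family of invertible matrices $\bP(\bu)$ simultaneously diagonalizing $\bA_{(j)}(\bu)=\bP(\bu)\,\Lambda_{(j)}(\bu)\,\bP(\bu)^{-1}$ with $\Lambda_{(j)}(\bu)$ diagonal of entries $\lambda_i^{(j)}(\bu)$. Formally substituting $\bw^{(k)}=\bP(\bu^{(k)})^{-1}\bv^{(k)}$ recasts the principal part of the hypothesis
$$
\partial_t\bv^{(k)}+\sum_{j=1}^n\bA_{(j)}(\bu^{(k)})\,\partial_{x_j}\bv^{(k)}\to\bcero\quad\text{in }L^2(\Omega;\R^N)
$$
as $N$ decoupled scalar transport problems
$$
\partial_t w_i^{(k)}+\sum_{j=1}^n\lambda_i^{(j)}(\bu^{(k)})\,\partial_{x_j}w_i^{(k)}=g_i^{(k)}\to 0\quad\text{in }L^2(\Omega),\qquad i=1,\dots,N.
$$

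For each scalar equation, I would multiply by $w_i^{(k)}$ and integrate over $\Omega$. The time derivative produces the boundary trace $\tfrac12\|w_i^{(k)}(0,\cdot)\|_{L^2(\R^n)}^2$ (the trace at $t=+\infty$ vanishing by $H^1$-decay), the transport terms contribute only a spatial divergence (which integrates to zero) plus a remainder proportional to derivatives of the coefficients, and the right-hand side is bounded by $\|g_i^{(k)}\|_{L^2}\|w_i^{(k)}\|_{L^2}\to 0$. A complementary argument along characteristics then propagates the smallness of the trace at $t=0$ and of the transport residual to the full bulk, exploiting the rigidity that on $(0,+\infty)\times\R^n$ a non-trivial traveling-wave profile is incompatible with $L^2$-integrability in time. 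This gives $w_i^{(k)}\to 0$ in $L^2(\Omega)$ for every $i$, and hence $\bv^{(k)}\to\bcero$ in $L^2(\Omega)$.

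The hard part will be controlling the lower-order corrections that both of the above steps produce: the substitution $\bv^{(k)}=\bP(\bu^{(k)})\bw^{(k)}$ generates commutator terms of the form $\bD\bP(\bu^{(k)})\,\nabla_{t,\bx}\bu^{(k)}\cdot\bw^{(k)}$, and the scalar energy identity involves $\partial_{x_j}\lambda_i^{(j)}(\bu^{(k)})$; neither is controlled by the $\bbX$-bound on $\bu^{(k)}$, which only provides $L^2\cap L^{2p}$ control and no bound on derivatives. I expect to bypass both difficulties with a freezing-coefficients argument: partition the range of $\bu$ into small cells on which the $\bA_{(j)}(\bu^{(k)})$ are nearly constant, apply the constant-coefficient statement (which reduces, through commutativity, to Brenner's $L^2$-stability for jointly commuting first-order systems) on each cell, and sum the local estimates with errors that tend to zero by the uniform continuity of $\bA_{(j)}$. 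This localization also absorbs the non-generic case of eigenvalue crossings, where a smooth global diagonalizer $\bP(\bu)$ need not exist.
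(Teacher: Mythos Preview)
Your overarching plan---simultaneously diagonalize the commuting family $\{\bD\bbf_{(j)}^T(\bu)\}$ and reduce to $N$ decoupled scalar transport conditions---matches the paper's Steps~3--4 exactly. The divergence is entirely in the treatment of the resulting scalar problem and of the rough coefficients, and that is where your proposal has a real gap.

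The paper does \emph{not} use an energy identity or a freezing-coefficients/Brenner argument. For the scalar condition
$$
v^{(k)}_t+\sum_j a_j(\bu^{(k)})\,v^{(k)}_{x_j}+g(t,\bx,\bu^{(k)})\,v^{(k)}=F_k\to 0\hbox{ in }L^2(\Omega),
$$
it first replaces the merely-$L^2$ coefficients $a_j(\bu^{(k)})$ by Lipschitz approximants $U^{(k)}_j$ with $\|U^{(k)}_j-a_j(\bu^{(k)})\|_{L^2}\to 0$ (allowing $\|U^{(k)}\|_{W^{1,\infty}}\to\infty$), and then argues along characteristics $\dot x(s)=U^{(k)}(s,x(s))$: the values $v^{(k)}(s,x(s))$ satisfy a scalar linear ODE with vanishing forcing, and a telescoping comparison between any two characteristics, combined with the $L^2$-decay in $t$ of the weak limit $v$, forces $v^{(k)}\to 0$ pointwise and hence in $L^2$. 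The lower-order commutator terms coming from the change of basis $\bv^{(k)}=\bP(\bu^{(k)})\bw^{(k)}$ are packaged into the zeroth-order coefficient $g$ and handled inside the ODE along characteristics; they are never differentiated.

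Your proposed freezing-coefficients bypass does not close. Localizing in the \emph{range} of $\bu^{(k)}$ produces preimage sets in $(t,\bx)$ with no geometric structure, so no constant-coefficient Cauchy-problem estimate (Brenner's or otherwise) is available on them. On the error side, the discrepancy $[\bA_{(j)}(\bu^{(k)})-\bA_{(j)}(\text{const})]\,\partial_{x_j}\bv^{(k)}$ pairs an $O(\text{cell size})$ factor against $\partial_{x_j}\bv^{(k)}$, which is only \emph{bounded} in $L^2$, not small; refining the partition makes each piece small but multiplies the number of pieces, and the sum does not tend to zero. The same obstruction kills the energy identity you sketch: the term involving $\partial_{x_j}\lambda_i^{(j)}(\bu^{(k)})$ requires a derivative bound on $\bu^{(k)}$ that the $\bbX$-hypothesis does not provide. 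The paper's Lipschitz-regularization-plus-characteristics device is precisely the substitute for this missing estimate, and it is the ingredient your outline is lacking.
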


As a direct consequence of Theorem \ref{ppt} and Proposition \ref{varhy}, we can state immediately the following corollary.

\begin{corollary}
Assume that the fluxes $\bbf_{(j)}$ in system \eqref{ecuaini} are smooth, and their differentials $\bD\bbf_{(j)}$ commute with each other according to Proposition \ref{varhy}. If the flow of $E$ produces a bounded sequence of fields $\{\bu^{(k)}\}$ in $\bbX$, then (a suitable subsequence of) this sequence of fields generates a Young measure solution of \eqref{ecuaini} according to Definition \eqref{clasica}.
\end{corollary}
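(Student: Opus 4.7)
The corollary is a direct chaining of the two previously stated results, so my plan is essentially to verify that the hypotheses of Theorem \ref{ppt} are met under the assumptions of the corollary, and then invoke Theorem \ref{ppt}.

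First, I would apply Proposition \ref{varhy}: since the fluxes $\bbf_{(j)}$ are smooth and their differentials $\bD\bbf_{(j)}$ commute pairwise, system \eqref{ecuaini} is variationally hyperbolic in the sense of Definition \ref{hiper}. This takes care of the structural hypothesis needed for Theorem \ref{ppt}.

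Second, I would verify the convergence hypothesis on the flow-generated sequence. By assumption $\{\bu^{(k)}\}$ is bounded in $\bbX = L^2(\Omega;\R^N)\cap L^{2p}(\Omega;\R^N)$ and $E'(\bu^{(k)})\to\bcero$. The polynomial growth hypothesis on the flux (degree at most $p$) gives a pointwise bound $|\bbf(\bu^{(k)})|\le C(1+|\bu^{(k)}|^p)$, so $\{\bbf(\bu^{(k)})\}$ is uniformly bounded in $L^2(\Omega;\R^{N\times n})$. Since $L^2$ is reflexive, passing to a (common) subsequence I can assume both $\bu^{(k)}\rightharpoonup\bu$ in $L^2(\Omega;\R^N)$ and $\bbf(\bu^{(k)})\rightharpoonup\overline\bbf$ in $L^2(\Omega;\R^{N\times n})$. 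This is exactly the weak-convergence hypothesis in Theorem \ref{ppt}.

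Finally, I would invoke Theorem \ref{ppt}, which asserts that this subsequence generates a Young measure $\nu=\{\nu_{(t,\bx)}\}_{(t,\bx)\in\Omega}$ satisfying \eqref{debily} with first moments given by \eqref{momentos}, i.e.\ a Young-measure solution in the sense of Definition \ref{clasica}.

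Since both ingredients are already proved in the excerpt, I do not expect any real obstacle; the only thing to be careful about is the compatibility of the spaces and the extraction of a single subsequence along which the needed weak convergences occur simultaneously, which is routine by a diagonal argument given the reflexivity of $L^2$.
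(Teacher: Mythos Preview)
Your proposal is correct and matches the paper's approach exactly: the paper does not even write out a proof, simply stating that the corollary is ``a direct consequence of Theorem \ref{ppt} and Proposition \ref{varhy}.'' You have merely made explicit the routine subsequence extraction that the paper leaves implicit (and which is already signaled in the parenthetical remark in Theorem \ref{ppt}).
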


In Section \ref{young}, we review some basic definitions and concepts related to Young measure theory and Young measure solutions for \eqref{ecuaini} as in Definition \ref{clasica}. We next motivate the definition of variational hyperbolicity relating it to the derivative $E'$ of the functional $E$, and explore a couple of scenarios about the structure of the initial system \eqref{ecuaini} and how this structure may or may not lead to the variational hyperbolicity property. 
In particular, Section \ref{tres} focuses on the proofs of  Theorem \ref{ppt} and Proposition \ref{varhy}. 
In Section \ref{diserr}, we also argue why we believe Definition \ref{clasica} must be strengthened, and support our critique by looking at the case of a single equation in dimension one where many ingredients can be made fully explicit including numerical approximations. 

\section{Young-measure solutions}\label{young}
There is a lot of recent interest in measure-valued or, more explicitly, Young measure-valued solutions. They are usually defined insisting in that they are solutions of system \eqref{ecuaini} in an average sense. More formally, we have the following universally accepted definition \cite{diperna}, \cite{dst}, \cite{feireisl}, \cite{tadmor2}, \cite{tadmor}, \cite{mnrr}, \cite{neustupa}, \cite{szekelyhidi}, \cite{tartar}. 

\begin{definition}\label{sec}
A family of probability measures $\nu=\{\nu_{(t, \bx)}\}_{(t, \bx)\in\Omega}$, supported in $\R^N$, 
is a Young-measure solution of \eqref{ecuaini} if \eqref{debil} holds in the average, namely, if 
\begin{equation}\label{debily}
\int_\Omega [\overline\bu(t, \bx)\cdot\partial_t \bw(t, \bx)+\overline\bbf(t, \bx):\nabla_\bx\bw(t, \bx)]\,dt\,d\bx+\int_{\R^n}\bw(0, \bx)\cdot\bu_0(\bx)\,d\bx=0
\end{equation}
for all test fields $\bw(t, \bx)\in H^1(\Omega; \R^N)$, where
\begin{equation}\label{momentos}
\overline\bu(t, \bx)=\int_{\R^N}\bz \,d\nu_{(t, \bx)}(\bz),
\quad \overline\bbf(t, \bx)=\int_{\R^N}\bbf(\bz) \,d\nu_{(t, \bx)}(\bz).
\end{equation}
\end{definition}

It is pretty clear that conventional weak solutions $\bu(t, \bx)$ correspond to Young-measure solutions that in fact are atomic or Dirac delta masses
$$
\nu_{(t, \bx)}=\delta_{\bu(t, \bx)}.
$$

We recall, for the convenience of readers, some fundamental facts about Young measures. 

\begin{definition}\label{YM}
A bounded sequence of fields $\{\bw_j(\by)\}$ in $L^p(D; \R^d)$, 
$$
\bw_j:D\subset\R^N\to\R^d,\quad p\ge1,
$$
generates the Young measure $\mu=\{\mu_\by\}_{\by\in D}$, where each $\mu_\by$ is a probability measure supported in $\R^d$, if whenever the composition $\{\Psi(\bw_j(\by))\}$ converges weakly (in $L^1(D)$) to some $\overline\Psi(\by)$, for a given continuous function $\Psi$, we have
\begin{equation}\label{represen}
\overline\Psi(\by)=\int_{\R^d}\Psi(\bz)\,d\nu_\by(\bz).
\end{equation}
\end{definition}

The most remarkable fact about the Young measure is that the family of probability measures $\{\mu_\by\}$ is determined by the sequence of fields $\{\bw_j\}$, and must not change with $\Psi$: that same family of probability measures furnishes all weak limits of compositions with arbitrary functions $\Psi$ through \eqref{represen}. Put it explicitly
$$
\Psi(\bw_j(\by))\rightharpoonup \int_{\R^d}\Psi(\bz)\,d\nu_\by(\bz)
$$
for every continuous function $\Psi$ for which $\{\Psi(\bw_j(\by))\}$ is weakly convergent. 

The most basic existence theorem for Young measures is the following \cite{ball}, \cite{ped4}.
\begin{theorem}
Let $\{\bw_j(\by)\}$ be a uniformly bounded sequence of fields as in Definition \ref{YM}. Then there is a subsequence, not relabeled, and a family of probability measures $\mu=\{\mu_\by\}_{\by\in D}$ such that \eqref{represen} holds as soon as $\{\Psi(\bw_j(\by))\}$ is weakly convergent for a continuous $\Psi$. In this case, we will write $\{\bw_j\}\sim\mu$. 
\end{theorem}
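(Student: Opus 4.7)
The plan is to realize the sequence $\{\bw_j\}$ through its associated parametrized Dirac masses $\nu_j(\by)=\delta_{\bw_j(\by)}$, and to extract a subsequential limit by a Banach--Alaoglu argument in the appropriate dual space. Identify the space of weakly-$*$ measurable maps $\by\mapsto \mu_\by\in M(\R^d)$ with the dual of $L^1(D;C_0(\R^d))$ (exhausting $D$ by bounded subsets first if necessary, then diagonalizing). Each $\nu_j$ lives in this dual with norm $1$, hence a subsequence, not relabeled, converges weakly-$*$ to some parametrized family $\mu=\{\mu_\by\}_{\by\in D}$ of positive Radon measures on $\R^d$ with total mass at most $1$.

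The next step is to upgrade each $\mu_\by$ to an honest probability measure. This is where the hypothesis that $\{\bw_j\}$ is uniformly bounded in some $L^p(D;\R^d)$ enters. For a smooth cut-off $\varphi_R$ with $\varphi_R=1$ on the ball $B_R$, $0\le\varphi_R\le 1$, compactly supported, Chebyshev yields
\[
\int_{D'}(1-\varphi_R(\bw_j(\by)))\,d\by\le R^{-p}\int_{D'}|\bw_j(\by)|^p\,d\by
\]
uniformly in $j$, on any bounded $D'\subset D$. The left-hand side passes to the limit by weak-$*$ convergence since $1-\varphi_R\in C_b$ (and can be split into $C_0$ pieces plus a constant after handling the constant against $\chi_{D'}\in L^1$). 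Letting $R\to\infty$ forces $\mu_\by(\R^d)=1$ for a.e.\ $\by$.

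To obtain the representation formula \eqref{represen}, the weak-$*$ convergence immediately delivers, for any $\Psi\in C_0(\R^d)$,
\[
\Psi(\bw_j(\by))\rightharpoonup \int_{\R^d}\Psi(\bz)\,d\mu_\by(\bz)\quad\text{in }L^1_{\mathrm{loc}}(D).
\]
For a general continuous $\Psi$ with $\{\Psi(\bw_j)\}$ weakly convergent in $L^1$, write $\Psi=\Psi\varphi_R+\Psi(1-\varphi_R)$. The truncated piece satisfies the representation above because $\Psi\varphi_R\in C_0(\R^d)$, while the tails $\Psi(\bw_j)(1-\varphi_R(\bw_j))$ have small $L^1$-norm uniformly in $j$ by the Dunford--Pettis equi-integrability hidden inside weak $L^1$-convergence, combined with the tightness established above. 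Passing to the limit first in $j$ and then in $R$ identifies the weak limit as $\int_{\R^d}\Psi(\bz)\,d\mu_\by(\bz)$.

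The main obstacle is precisely this last step: extending the representation from $C_0$-functions, where weak-$*$ convergence is tautological, to arbitrary continuous $\Psi$ for which $\{\Psi(\bw_j)\}$ is only assumed weakly $L^1$-convergent. Combining the tightness of $\{\mu_\by\}$ with the equi-integrability that weak $L^1$-convergence provides (via Dunford--Pettis) is what makes the truncation argument succeed, and it is also the reason one must start from a uniformly $L^p$-bounded sequence in the first place.
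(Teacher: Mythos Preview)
The paper does not supply its own proof of this theorem: it is stated as background and attributed to the references \cite{ball} and \cite{ped4}. There is therefore nothing in the paper to compare your argument against.

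That said, your sketch is the standard proof one finds in those references: embed the sequence as parametrized Dirac masses in the dual of $L^1(D;C_0(\R^d))$, extract a weak-$*$ limit by Banach--Alaoglu (using separability of the predual, which your exhaustion-and-diagonalization step secures), use the uniform $L^p$-bound via Chebyshev to establish tightness and hence that each $\mu_\by$ is a probability measure, and finally extend the representation from $C_0$ test functions to arbitrary continuous $\Psi$ for which $\{\Psi(\bw_j)\}$ is weakly $L^1$-convergent by truncation, controlling the tails through Dunford--Pettis equi-integrability combined with the tightness already obtained. Each of these steps is correct as written. The only place where a grader might ask for a line more of detail is the tail estimate in the final step: you should make explicit that equi-integrability of $\{\Psi(\bw_j)\}$ gives $\int_A|\Psi(\bw_j)|<\epsilon$ uniformly in $j$ whenever $|A|<\delta$, and that Chebyshev makes $|\{|\bw_j|>R\}|<\delta$ uniformly in $j$ for $R$ large, so the two combine to bound $\int|\Psi(\bw_j)(1-\varphi_R(\bw_j))|$ uniformly.
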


What this result is guaranteeing is that bounded sequences of fields always generate, module subsequences, Young measures. If there are no constraint to be respected on fields or on probability measures, then the converse is also correct. 

\begin{theorem}\label{genera}
Let $\mu=\{\mu_\by\}_{\by\in D}$ be a family of probability measures, supported in $\R^d$, such that
$$
\int_D\int_{\R^d}|\bz|^p\,d\mu_\by(\bz)\,d\by<+\infty, \quad p\ge1.
$$ 
Then there are uniformly bounded sequences of fields $\{\bw_j(\by)\}$ in $L^p(D; \R^d)$ whose underlying Young measure, according to Definition \ref{YM}, is precisely  the given family $\mu=\{\mu_\by\}_{\by\in D}$. 
\end{theorem}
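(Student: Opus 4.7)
The plan is to prove Theorem \ref{genera} by reducing it to the case of a single probability measure on a small cube and then gluing and taking a diagonal limit. First I would fix an increasing sequence of partitions of $D$ by small cubes $\{D_i^{(k)}\}_i$ with $\max_i\mathrm{diam}(D_i^{(k)})\to 0$, and define the averaged (piecewise constant in $\by$) families
\begin{equation*}
\mu^{(k)}_\by=\bar\mu_i^{(k)}:=\frac{1}{|D_i^{(k)}|}\int_{D_i^{(k)}}\mu_{\by'}\,d\by'\quad\text{for }\by\in D_i^{(k)}.
\end{equation*}
A standard dominated convergence argument using the assumption $\int_D\int_{\R^d}|\bz|^p\,d\mu_\by<\infty$ shows that for every $\Psi\in C(\R^d)$ with $|\Psi(\bz)|\le C(1+|\bz|^p)$ the functions $\by\mapsto\int\Psi\,d\mu^{(k)}_\by$ converge in $L^1(D)$ to $\by\mapsto\int\Psi\,d\mu_\by$. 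Hence it suffices to generate each $\mu^{(k)}$ by a bounded sequence $\{\bw^{(k)}_j\}_j\subset L^p(D;\R^d)$ and then diagonalize.

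Second, I would construct the sequence associated with a single probability measure $\nu$ on $\R^d$ with $\int|\bz|^p\,d\nu<\infty$. By a standard ``Skorohod-type'' construction (e.g.\ taking a Borel measurable map $T:[0,1]^N\to\R^d$ that pushes Lebesgue measure forward to $\nu$), one produces a single measurable function $h:[0,1]^N\to\R^d$ in $L^p$ whose law is $\nu$. Extending $h$ periodically and setting
\begin{equation*}
\bw_j(\by)=h(j\by)
\end{equation*}
on a cube, one obtains, via the Riemann--Lebesgue-type lemma for oscillating $L^p$ functions, the homogeneous Young measure $\nu$; the bound $\|\bw_j\|_{L^p}\le\|h\|_{L^p}$ is uniform. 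Applying this cube by cube inside each $D_i^{(k)}$, with the local measure $\bar\mu_i^{(k)}$ and with a local oscillation rate $j$, I obtain a sequence $\{\bw^{(k)}_j\}_j$ that generates the piecewise constant family $\mu^{(k)}$ and is bounded in $L^p(D;\R^d)$ by a constant depending only on $\int_D\int_{\R^d}|\bz|^p\,d\mu_\by$.

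Third, I would close the argument with a diagonal extraction. Metrize convergence of Young measures through a countable dense family $\{\Psi_\ell\varphi_m\}$ where $\Psi_\ell\in C_c(\R^d)$ and $\varphi_m\in L^\infty(D)$ with compact support; the convergence ``$\{\bw_j\}$ generates $\mu$'' is equivalent to $\int_D\varphi_m(\by)\Psi_\ell(\bw_j(\by))\,d\by\to\int_D\varphi_m(\by)\int\Psi_\ell\,d\mu_\by\,d\by$ for every $\ell,m$. For each fixed $k$ choose $j=j(k)$ so large that these finitely many tested integrals for $\bw^{(k)}_{j(k)}$ differ from the corresponding ones for $\mu^{(k)}$ by less than $1/k$; combined with the $L^1$ convergence of $\int\Psi_\ell\,d\mu^{(k)}_\by$ to $\int\Psi_\ell\,d\mu_\by$ established in the first step, the diagonal sequence $\bw_k:=\bw^{(k)}_{j(k)}$ generates $\mu$. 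Uniform boundedness of the $p$-th moment of $\mu_\by$ against the Lebesgue measure on $D$ translates, via this construction, into a uniform $L^p$-bound for $\{\bw_k\}$.

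The main obstacle I anticipate is the passage from compactly supported test functions to the growth-$p$ test functions needed to conclude that $\{\bw_k\}$ generates $\mu$ in the full sense of Definition \ref{YM}; this requires a tightness/equi-integrability estimate saying that $\{|\bw_k|^p\}$ is equi-integrable, which in turn follows from the hypothesis $\int_D\int_{\R^d}|\bz|^p\,d\mu_\by\,d\by<\infty$ together with the explicit construction (the $L^p$ norm of $\bw^{(k)}_{j(k)}$ is controlled by the $p$-th moment of $\mu^{(k)}$, which converges to that of $\mu$). Once equi-integrability is secured, the representation formula \eqref{represen} extends from $C_c(\R^d)$ to all continuous $\Psi$ of at most $p$-growth, completing the proof.
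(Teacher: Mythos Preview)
The paper does not give its own proof of Theorem~\ref{genera}: it is stated there as a classical fact, with references to \cite{ball} and \cite{ped4}, as part of the review of Young-measure background. So there is no ``paper's proof'' to compare against; your task reduces to whether your sketch is a valid argument for this well-known result.

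Your outline is essentially the standard proof (as in \cite{ped4}): approximate $\mu$ by piecewise-constant-in-$\by$ families, realize each homogeneous piece by periodic oscillation of a Skorohod map, and diagonalize using a countable family of test functions. This is correct and complete in spirit. One remark on your ``main obstacle'': you do not in fact need equi-integrability of $\{|\bw_k|^p\}$ to conclude that $\{\bw_k\}$ generates $\mu$ in the sense of Definition~\ref{YM}. That definition is conditional---\emph{if} $\Psi(\bw_k)$ converges weakly in $L^1$, \emph{then} the limit is the $\mu$-moment---so it suffices to show that every subsequence has a further subsequence whose Young measure is $\mu$, and for this testing against $\Psi\in C_c(\R^d)$ is enough (such $\Psi$ separate probability measures). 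The uniform $L^p$-bound you already have, together with the existence theorem preceding Theorem~\ref{genera}, closes the argument without the extra equi-integrability step. Equi-integrability of $|\bw_k|^p$ would give you the stronger statement that the representation formula actually holds for \emph{all} continuous $\Psi$ of growth at most $p$, but that is more than the theorem asserts.
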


We can now establish the relationship of a Young measure solution to the defects of generating sequences. 

\begin{proposition}\label{yms}
A family of probability measures $\nu$ is a Young-measure solution according to Definition \ref{sec}, generated by a sequence $\{\bu_j\}$  ($\{\bu_j\}\sim\nu$) such that $\{\bu_j\}$, and $\{\bbf(\bu_j)\}$ are weakly convergent sequences in $L^2(\Omega; \R^N)$, if and only if for the sequence of associated defects $\bv_j$ in \eqref{primii}, we have that $\bv_j\rightharpoonup\bcero$ in $H^1(\Omega; \R^N)$.
\end{proposition}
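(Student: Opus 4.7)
The plan is to use the defect equation \eqref{primii} as a bridge between the Young-measure solution condition and the weak behaviour of the defects. Rewriting \eqref{primii} by separating defect terms from data terms gives, for every $\bw\in H^1(\Omega;\R^N)$,
\begin{equation*}
\int_\Omega [\partial_t \bv_j \cdot \partial_t \bw + \nabla_\bx \bv_j : \nabla_\bx \bw]\,dt\,d\bx = -\int_\Omega [\bu_j \cdot \partial_t \bw + \bbf(\bu_j):\nabla_\bx \bw]\,dt\,d\bx - \int_{\R^n} \bu_0(\bx)\cdot\bw(0,\bx)\,d\bx.
\end{equation*}
Since $\{\bu_j\}\sim\nu$ and both $\bu_j$ and $\bbf(\bu_j)$ are weakly convergent in $L^2$, the Young-measure representation \eqref{represen} identifies their weak limits as precisely $\overline\bu$ and $\overline\bbf$ from \eqref{momentos}. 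Consequently, for each fixed $\bw$, the right-hand side above converges to a quantity whose vanishing for all $\bw\in H^1(\Omega;\R^N)$ is exactly \eqref{debily}, i.e.\ to $\nu$ being a Young-measure solution.

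The implication $\bv_j\rightharpoonup\bcero \Rightarrow \nu$ is a YMs is now immediate: weak convergence in $H^1$ gives $(\partial_t\bv_j,\nabla_\bx\bv_j)\rightharpoonup\bcero$ in $L^2$, so the left-hand side tends to zero for every test $\bw$, and the identity transfers the vanishing to the right-hand side, which is \eqref{debily}.

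For the converse, assume $\nu$ is a Young-measure solution; then the right-hand side, and hence the left, tends to zero for every $\bw\in H^1$. I would first establish $H^1$-boundedness of $\{\bv_j\}$ by testing \eqref{primii} against $\bw=\bv_j$, applying Cauchy-Schwarz to the terms involving $\bu_j$, $\bbf(\bu_j)$, and invoking a trace inequality $\|\bv_j(0,\cdot)\|_{L^2(\R^n)}\le C\|\bv_j\|_{H^1(\Omega)}$ to absorb the $\bu_0$-term. Extract a weakly convergent subsequence $\bv_j\rightharpoonup\bv$ in $H^1$, pass to the limit in the displayed identity to obtain $\int_\Omega[\partial_t\bv\cdot\partial_t\bw+\nabla_\bx\bv:\nabla_\bx\bw]=0$ for every $\bw\in H^1$, and choose $\bw=\bv$. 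This forces $\nabla_{t,\bx}\bv\equiv\bcero$; since $\bv\in L^2(\Omega)$ and $\Omega$ is unbounded, $\bv=\bcero$. Uniqueness of the weak limit then propagates convergence to the full sequence.

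The main obstacle is the a priori $H^1$ bound on $\{\bv_j\}$ purely in terms of the $L^2$ norms of $\bu_j$, $\bbf(\bu_j)$ and $\bu_0$: the quadratic energy defining the defect only controls the gradient seminorm $\|\nabla_{t,\bx}\bv_j\|_{L^2}$, which on the unbounded domain $\Omega$ does not dominate the full $H^1$ norm, since constants lie in the kernel of the gradient. Closing this bound requires delicate interplay between the boundary trace at $t=0$ and a normalization that eliminates the constant ambiguity; once this is in place, the weak limit argument above goes through cleanly.
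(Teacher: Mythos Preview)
Your approach is the same as the paper's: both proofs compare the defect identity \eqref{primii} with the Young-measure solution identity \eqref{debily}, use the weak convergence $\bu_j\rightharpoonup\overline\bu$, $\bbf(\bu_j)\rightharpoonup\overline\bbf$ to match the data terms, and read off the behaviour of the defect terms. The forward implication is identical.

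For the converse, the paper is terser. It records the limiting identity in the form
\[
\int_\Omega\bigl(\partial_t\bv_j\cdot\partial_t\bw+\nabla_\bx\bv_j:\nabla_\bx\bw+\bv_j\cdot\bw\bigr)\,dt\,d\bx\to 0
\]
for every $\bw\in H^1$, i.e.\ with an extra zeroth-order term $\bv_j\cdot\bw$ that does not appear in \eqref{primii} as written; this amounts to tacitly working with the full $H^1$ inner product (a coercive formulation of the defect problem). Under that reading the conclusion $\bv_j\rightharpoonup\bcero$ is immediate, since the vanishing of $\langle\bv_j,\bw\rangle_{H^1}$ for every $\bw$ yields boundedness by Banach--Steinhaus and then weak convergence. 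Your version sticks to \eqref{primii} literally, and you are right that one then needs an a~priori $H^1$ bound before extracting a subsequence and identifying the weak limit as zero; you correctly flag that only the seminorm is controlled and that the unboundedness of $\Omega$ is what kills nonzero constants. This extra care is warranted given the way the defect equation is actually stated in the paper, and your subsequence-plus-uniqueness argument is the clean way to finish.
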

\begin{proof}
The proof is nothing but a comparison between the two variational identities
$$
\int_\Omega [\overline\bu(t, \bx)\cdot\partial_t \bw(t, \bx)+\overline\bbf(t, \bx):\nabla_\bx\bw(t, \bx)]\,dt\,d\bx+\int_{\R^n}\bw(0, \bx)\cdot\bu_0(\bx)\,d\bx=0,
$$
and
$$
\int_\Omega[ (\bu_j+\partial_t\bv_j)\cdot\partial_t \bw+(\bbf(\bu_j)+\nabla_\bx \bv_j):\nabla_\bx\bw]\,dt\,d\bx+\int_{\R^n} \bu_0(\bx)\cdot\bw(0, \bx)\,d\bx=0
$$
for all $\bw\in H^1(\Omega; \R^N)$. Because
$$
\bu_j\rightharpoonup\overline\bu,\quad \bbf(\bu_j)\rightharpoonup\overline\bbf,
$$
we can only conclude that
$$
\int_\Omega(\partial_t\bv_j\cdot\partial_t \bw+\nabla_\bx \bv_j:\nabla_\bx\bw
+ \bv_j\cdot\bw)\,dt\,d\bx=0
$$
for all $\bw\in H^1(\Omega; \R^N)$. 
\end{proof}

One relevant remark is that we have explicitly discarded concentration effects created by the generating sequence $\{\bu_j\}$ in Definition \ref{nueva} given that it is well-known that Young measure are not tailored to capture this phenomenon. A bit more sophisticated concept of measure-valued solution can be implemented to account also for concentrations in generating sequences of nearly solutions (see \cite{szekelyhidi} for instance). 

It is interesting to extend the error functional $E$ for Young measures generated by arbitrary sequences in the space $\bbX$ in the following way. Put 
$$
\overline\bbX=\{\nu=\{\nu_{(t, \bx)}\}_{(t, \bx)\in\Omega}: \{\bu_j\}\sim\nu, \{\bu_j\}\subset\bbX\},
$$
we can define
$$
\overline E:\overline\bbX\to\R,\quad \overline E(\nu)=\frac12\int_\Omega(|\partial_t \bv|^2+|\nabla_\bx \bv|^2)\,dt\,d\bx,
$$
where, for 
$$
\overline\bu(t, \bx)=\int_{\R^N}\bz \,d\nu_{(t, \bx)}(\bz),
\quad \overline\bbf(t, \bx)=\int_{\R^N}\bbf(\bz) \,d\nu_{(t, \bx)}(\bz).
$$
the defect field $\bv\in H^1(\Omega; \R^N)$ is uniquely determined by
$$
\int_\Omega[ (\overline\bu+\partial_t\bv)\cdot\partial_t \bw+(\overline\bbf+\nabla_\bx \bv):\nabla_\bx\bw]\,dt\,d\bx+\int_{\R^n} \bu_0(\bx)\cdot\bw(0, \bx)\,d\bx=0
$$
for all $\bw\in H^1(\Omega; \R^N)$. It is easy to realize that Young measures minimizers would correspond to Young measure solutions according to Definition \ref{sec}. 

\section{Variational hyperbolicity}\label{tres}
Let us start by computing the derivative of $E$. To this aim, we become interested in the limit
$$
\langle E'(\bu), \bU\rangle=\lim_{\epsilon\to0}\frac{E(\bu+\epsilon \bU)-E(\bu)}\epsilon,
$$
for $\bu$, $\bU$ in $\bbX$, and where $E'(\bu)$ is understood as an element of the dual of $\bbX$. Let us put $\bv+\epsilon \bV$ for the perturbation produced in the defect $\bv$ of $\bu$, when we perturb $\bu$ by $\bU$. Then 
\begin{gather}
\int_\Omega[ (\bu+\epsilon \bU+\partial_t \bv+\epsilon\partial_t \bV)\cdot\partial_t \bw+(\bbf(\bu+\epsilon \bU)+\nabla_\bx \bv+\epsilon\nabla_\bx \bV):\nabla_\bx \bw]\,dt\,d\bx\nonumber\\
+\int_{\R^n} \bu_0(\bx)\cdot\bw(0, \bx)\,d\bx=0.\nonumber
\end{gather}
Even more explicitly
\begin{gather}
\int_{\R^n} \bu_0(\bx)\cdot\bw(0, \bx)\,d\bx+\int_\Omega(\bu+\epsilon \bU+\partial_t \bv+\epsilon\partial_t \bV)\cdot\partial_t \bw\,dt\,d\bx\nonumber\\
+\int_\Omega\sum_{j=1}^n(\bbf_{(j)}(\bu+\epsilon \bU)+\partial_{x_j} \bv+\epsilon\partial_{x_j} \bV)\cdot\partial_{x_j} \bw\,dt\,d\bx=0.\nonumber
\end{gather}
By differentiating with respect to $\epsilon$, and setting $\epsilon=0$ afterwards, we find
\begin{equation}\label{derivada}
\int_\Omega[ (\bU+\partial_t \bV)\cdot\partial_t\bw+\sum_{j=1}^n
\partial_{x_j}\bw\,\bD \bbf_{(j)}(\bu)\bU+\nabla_\bx \bV:\nabla_\bx\bw]\,dt\,d\bx=0,
\end{equation}
which is regarded as a variational identity determining $\bV$, once $\bu$, $\bv$, and $\bU$ are known. 
Note that each $\bD\bbf_{(j)}(\bu)$ is a $N\times N$ matrix. 
On the other hand, the derivative of the error itself is
$$
\langle E'(\bu), \bU\rangle=\int_\Omega(\nabla_\bx\bv\cdot\nabla_\bx\bV+\partial_t\bv\cdot\partial_t\bV)\,dt\,d\bx,
$$
and by taking $\bw=\bv$ in \eqref{derivada}, we have
$$
\langle E'(\bu), \bU\rangle=-\int_\Omega \bU\cdot(\partial_t\bv+\sum_{j=1}^n
\bD\bbf_{(j)}^T(\bu)\partial_{x_j}\bv)\,dt\,d\bx.
$$
We, therefore, clearly see, due to the arbitrariness of $\bU$, that
$$
E'(\bu)=-(\partial_t \bv+\sum_{j=1}^n
\bD\bbf_{(j)}^T(\bu)\partial_{x_j}\bv),
$$
where $\bv$ is the unique solution in \eqref{primii}. 

According to Definition \ref{hiper}, if system \eqref{ecuaini} is variationally hyperbolic and $\{\bu^{(k)}\}$ is a bounded sequence in $\bbX$ with corresponding defects $\{\bv^{(k)}\}$ such that $E'(\bu^{(k)})\to\bcero$, then $\bv^{(k)}\to\bcero$ in $L^2(\Omega; \R^N)$, though this convergence might not take place in $H^1(\Omega; \R^N)$. However, as discussed in Section \ref{young}, convergence to zero in $L^2(\Omega; \R^N)$ is all that is required for a Young-measure solution. 

We are ready to prove Theorem \ref{ppt}. 

\begin{proof}[Proof of Theorem \ref{ppt}]
Suppose $\{\bu^{(k)}\}$ is a bounded sequence in $\bbX$ such that both $\{\bu^{(k)}\}$ and $\{\bbf(\bu^{(k)})\}$ are weakly convergent in $\bbX$, and $E'(\bu^{(k)})\to\bcero$. If we use $\bx=\bv^{(k)}$ as a test function in \eqref{primii}, we find
$$
\|\bv^{(k)}\|^2_{H^1(\Omega; \R^N)}=-\int_\Omega (\bu^{(k)}\cdot\partial_t\bv^{(k)}+\bbf(\bu^{(k)}):\nabla_\bx\bv^{(k)})\,dt\,d\bx-\int_{\R^n}\bu_0(\bx)\cdot\bv^{(k)}(0, \bx)\,d\bx,
$$
and so
$$
\|\bv^{(k)}\|_{H^1(\Omega; \R^N)}\le \max(\|\bu^{(k)}\|_{L^2(\Omega; \R^N)}, \|\bbf(\bu^{(k)})\|_{L^2(\Omega; \R^N)})
+\|\bu_0\|_{L^2(\R^n)}.
$$
In this way, the sequence of defects $\{\bv^{(k)}\}$ is uniformly bounded in $H^1(\Omega; \R^N)$. Definition \eqref{hiper} implies then that, at least for a suitable subsequence, $\bv^{(k)}\to\bcero$ weakly in $H^1(\Omega; \R^N)$ and strongly in $L^2(\Omega; \R^N)$. According to Proposition \ref{yms}, the Young measure generated by (an appropriate subsequence of ) $\{\bu^{(k)}\}$ becomes a Young measure solution of our initial system. 
\end{proof}

We would like to better understand variational hyperbolicity, and, in particular, provide a proof of Proposition \ref{varhy}. 
The analysis of linear, variable coefficients, first-order PDE systems of the kind 
\begin{equation}\label{hfos}
\partial_t \bv+\sum_{j=1}^n
\bD\bbf_{(j)}^T(\bu)\partial_{x_j}\bv=\F
\end{equation}
require regularity of the coefficients $\bD\bbf_{(j)}^T(\bu)$, which, in general, is not the case if the field $\bu$ only belong to $\bbX$. 
Notice, however, that we are not facing the task of proving existence of solutions for \eqref{hfos}, but rather some kind of a priori analysis or energy estimates. We are given the sequence of fields $\bv^{(k)}$ complying with \eqref{hfos}
\begin{equation}\label{hfoss}
\partial_t \bv^{(k)}+\sum_{j=1}^n
\bD\bbf_{(j)}^T(\bu^{(k)})\partial_{x_j}\bv^{(k)}=\F^{(k)}\to\bcero\hbox{ in }L^2(\Omega; \R^N),
\end{equation}
and would like to conclude that they necessarily converge to zero in $L^2(\Omega; \R^N)$. A natural strategy is to use energy estimates for this kind of systems. We resort to some basic material from \cite{benzoni-serre}. 

A family of operators
$$
\bL_\bu=\partial_t+\sum_{j=1}^n \bA_{(j)}(\bu(t, \bx))\partial_{x_j}
$$
for each field $\bu\in\bbX$, admits a symbolic symmetrizer if there is a smooth mapping
$$
\bS(\bu, \xi):\R^N\times(\R^n\setminus\{\bcero\})\to\Mc
$$
homogeneous of degree zero in $\xi$, with the property
$$
\bS(\bu, \xi)=\bS(\bu, \xi)^*>\bcero,\quad \bS(\bu, \xi)\bA(\bu, \xi)=\bA(\bu, \xi)^*\bS(\bu, \xi),
$$
where
$$
\bA(\bu, \xi)=\sum_{j=1}^n\xi_j \bA_{(j)}(\bu).
$$
Theorem 2.5 in \cite{benzoni-serre} reads:
\begin{theorem}\label{bense}
Suppose $\bA(\bu, \xi)$ admits a symbolic symmetrizer and that
$$
\|\bu\|_{W^{1, \infty}([0, T]\times\R^n; \R^N)}\le \omega.
$$
Then there are constants $K=K(\omega)$ and $\gamma=\gamma(\omega)$ in such a way that 
$$
\|\bv(\cdot, t)\|^2_{L^2(\R^n; \R^N)}\le K\left(e^{\gamma t}\|\bv(\cdot, 0)\|^2_{L^2(\R^n; \R^N)}+
\int_0^te^{\gamma(t-\tau)}\|\bL_\bu\bv(\tau)\|^2_{L^2(\R^n; \R^N)}\,d\tau\right),
$$
for all $\bv\in\cC^1([0, T]; L^2(\R^n; \R^N))\cap\cC([0, T]; H^1(\R^n; \R^N))$.
\end{theorem}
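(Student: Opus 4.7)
The strategy is a standard symmetric hyperbolic energy estimate, but adapted to the fact that the symmetrizer is only symbolic and that $\bu$ is merely $W^{1,\infty}$. To exploit the identity $\bS(\bu, \xi)\bA(\bu, \xi) = \bA(\bu, \xi)^*\bS(\bu, \xi)$, which holds at the symbol level, I would quantize $\bS$ as a paradifferential operator $\T_\bS(t)$ associated with the symbol $\bS(\bu(t, \cdot), \xi)$; Bony's paradifferential calculus tolerates Lipschitz coefficients and enjoys a symbolic calculus (symmetry, ellipticity, commutator bounds) modulo lower-order remainders whose $L^2$-operator norm is controlled by the $W^{1,\infty}$ norm of $\bu$. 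Since $\bS$ is Hermitian, positive definite, and homogeneous of degree zero in $\xi$, $\T_\bS(t)$ is self-adjoint up to order $-1$ and satisfies a G\aa rding-type lower bound, so the quadratic form
$$
\mathcal{E}(t) := \langle \T_\bS(t)\,\bv(t, \cdot),\,\bv(t, \cdot)\rangle_{L^2(\R^n; \R^N)}
$$
is equivalent to $\|\bv(t, \cdot)\|^2_{L^2(\R^n; \R^N)}$, with constants depending only on $\omega$.

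Next, I would differentiate $\mathcal{E}$ in $t$ and substitute $\partial_t\bv = \bL_\bu\bv - \sum_j \bA_{(j)}(\bu)\partial_{x_j}\bv$. Three contributions appear. The term $\langle (\partial_t \T_\bS)\bv, \bv\rangle$ is bounded by $C(\omega)\,\|\bv\|^2_{L^2}$ because $\partial_t \bu \in L^\infty$. The source contribution $2\,\mathrm{Re}\,\langle \T_\bS\,\bL_\bu\bv, \bv\rangle$ is bounded by $\|\bL_\bu\bv\|^2_{L^2} + C(\omega)\|\bv\|^2_{L^2}$ via Cauchy--Schwarz and the $L^2$-boundedness of $\T_\bS$. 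The remaining symmetrization term $-2\,\mathrm{Re}\sum_j \langle \T_\bS\,\bA_{(j)}(\bu)\partial_{x_j}\bv, \bv\rangle$, once written symmetrically, has principal part governed by $\T_\bS\,\bA(\bu, \xi) + \bA(\bu, \xi)^*\,\T_\bS$, which vanishes at the symbol level by the symmetrizer identity; what survives is an $L^2$-bounded remainder of order zero whose operator norm is controlled by $C(\omega)$.

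Collecting these bounds produces a differential inequality of the form
$$
\frac{d}{dt}\mathcal{E}(t)\;\le\;\gamma\,\mathcal{E}(t)\,+\,K\,\|\bL_\bu\bv(t, \cdot)\|^2_{L^2(\R^n; \R^N)},
$$
for some $\gamma = \gamma(\omega)$ and $K = K(\omega)$, and the announced estimate then follows by Gronwall's lemma and the equivalence $\mathcal{E}(t) \sim \|\bv(t, \cdot)\|^2_{L^2}$. The main obstacle is precisely the symmetrization step: one must verify carefully that both the G\aa rding-type remainder coming from the paradifferential symbolic calculus and the commutator $[\T_\bS, \bA_{(j)}(\bu)\partial_{x_j}]$ are $L^2$-bounded with constants depending only on $\|\bu\|_{W^{1, \infty}}$ and on the smoothness of $\bS$ in its arguments. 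That control of the subprincipal terms by the Lipschitz norm of the coefficients is the technical core of the Friedrichs-symbolic-symmetrizer machinery developed in \cite{benzoni-serre}, and it is where the symmetrizer hypothesis is genuinely used.
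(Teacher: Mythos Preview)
The paper does not prove this statement: it is quoted verbatim as Theorem~2.5 of \cite{benzoni-serre} and used as a black box. Your sketch is precisely the argument carried out in that reference---paradifferential quantization of the symbolic symmetrizer, G\aa rding-type equivalence of the quadratic form with the $L^2$ norm, the symmetrizer identity to kill the principal part, Lipschitz-controlled remainders, and Gronwall---so there is nothing to compare against within the present paper.
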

However, to fit our purposes here, we need a slightly different version of this result which is also contained in \cite{benzoni-serre}: Remarks 2.1 and 2.4. The first one refers to reversing time, and adjusting the corresponding linear operator; the second one involves a standard density argument. 

\begin{corollary}\label{bensedos}
Suppose $\bA(\bu, \xi)$ admits a symbolic symmetrizer and that
$$
\|\bu\|_{W^{1, \infty}([0, T]\times\R^n; \R^N)}\le \omega,
$$
for $T>0$. 
Then there are constants $K=K(\omega)$ and $\gamma=\gamma(\omega)$ in such a way that,  
$$
\|\bv(\cdot, t)\|^2_{L^2(\R^n; \R^N)}\le K e^{\gamma(T-t)}\left(\|\bv(\cdot, T)\|^2_{L^2(\R^n; \R^N)}+
\int_0^T\|\bL_\bu\bv(\tau)\|^2_{L^2(\R^n; \R^N)}\,d\tau\right),
$$
for all $\bv\in H^1([0, T]\times\R^n)$, and every $t\in[0, T]$.
\end{corollary}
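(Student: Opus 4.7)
The plan is to reduce Corollary \ref{bensedos} to Theorem \ref{bense} in two steps, mirroring Remarks 2.1 and 2.4 of \cite{benzoni-serre}: first, a time-reversal that converts the forward estimate propagating from $t=0$ into a backward estimate propagating from $t=T$; and second, a density argument that relaxes the admissible regularity down to $H^1([0, T]\times\R^n)$.

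For the time reversal, I introduce $\tilde\bv(s, \bx) = \bv(T-s, \bx)$ and $\tilde\bu(s, \bx) = \bu(T-s, \bx)$, and compute
$$
(\bL_\bu\bv)(T-s, \bx) = -\bigl[\partial_s\tilde\bv - \sum_{j=1}^n \bA_{(j)}(\tilde\bu)\partial_{x_j}\tilde\bv\bigr] =: -\tilde\bL_{\tilde\bu}\tilde\bv(s, \bx),
$$
so that the reversed operator $\tilde\bL_{\tilde\bu}$ has coefficient matrices $-\bA_{(j)}(\tilde\bu)$. Crucially, the symbolic symmetrizer is preserved: multiplying the identity $\bS\bA=\bA^*\bS$ by $-1$ shows that the same $\bS$ symmetrizes the symbol $-\bA$ associated with $\tilde\bL_{\tilde\bu}$, and its positivity, homogeneity and smoothness are all inherited through composition with $s\mapsto T-s$. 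Hence Theorem \ref{bense} is applicable to $\tilde\bv$ on $[0, T]$. Transcribing its conclusion back to $\bv$ via the substitutions $t = T-s$ on the left and $\sigma = T - \tau$ inside the integral, then bounding $e^{\gamma(\sigma-t)}\le e^{\gamma(T-t)}$ uniformly on $\sigma\in[t, T]$ and enlarging the integration range to $[0, T]$, produces exactly the estimate claimed in the corollary.

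For the density step, I approximate a given $\bv\in H^1([0, T]\times\R^n)$ by a sequence $\{\bv^{(k)}\}$ of smooth functions converging to $\bv$ in $H^1([0, T]\times\R^n)$. The embedding $H^1([0, T]\times\R^n)\hookrightarrow \cC([0, T]; L^2(\R^n; \R^N))$, which holds because $\bv$ and $\partial_t\bv$ both lie in $L^2([0, T]\times\R^n)$, ensures $\|\bv^{(k)}(\cdot, t)\|_{L^2}\to \|\bv(\cdot, t)\|_{L^2}$ uniformly in $t\in[0, T]$, in particular at the two prescribed levels $t$ and $T$. Since $\bu\in W^{1,\infty}$, the coefficient matrices $\bA_{(j)}(\bu)$ are uniformly bounded, so $\bL_\bu\bv^{(k)}\to \bL_\bu\bv$ in $L^2([0, T]\times\R^n; \R^N)$ as well. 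Passing to the limit in the estimate already established for each $\bv^{(k)}$ delivers the inequality for $\bv$.

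I anticipate no substantial obstacle in this route; the whole argument is bookkeeping that repackages Theorem \ref{bense}. The single verification that must not be skipped is that the time-reversal preserves the symbolic-symmetrizer hypothesis together with all qualitative assumptions on $\bu$ and $\bS$, but this is immediate because every norm and every algebraic identity involved is invariant (or merely transposed) under $t\mapsto T-t$.
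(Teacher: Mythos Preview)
Your proposal is correct and follows precisely the route the paper indicates: the paper does not spell out a proof but simply refers to Remarks 2.1 and 2.4 in \cite{benzoni-serre}, identifying them respectively as the time-reversal step and the density argument, which is exactly the two-step reduction you carry out. Your bookkeeping (the substitution $t=T-s$, the bound $e^{\gamma(\sigma-t)}\le e^{\gamma(T-t)}$, and the passage to the limit via $H^1\hookrightarrow\cC([0,T];L^2)$) is accurate and fills in the details the paper leaves implicit.
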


This result is not fine enough to show variational hyperbolicity because of the uniform bound $\omega$ required on the uniform Lipschitz norm, for which one can hardly find a substitute if your sequence $\{\bu^{(k)}\}$ is just bounded in $L^2(\Omega; \R^N)$. 

\begin{lemma}
Consider our initial system \eqref{ecuaini} with a smooth flux $\bbf$ such that the family of operators
$$
\bL_\bu=\partial_t+\sum_{j=1}^n \bD\bbf_{(j)}^T(\bu)\partial_{x_j}
$$
admits a symbolic symmetrizer. If $\{\bu^{(k)}\}$ is uniformly bounded in $W^{1, \infty}(\Omega; \R^N)$, and $\{\bv^{(k)}\}$ is bounded in $H^1(\Omega; \R^N)$ with
\begin{equation}\label{hfosss}
\bL_{\bu^{(k)}}\bv^{(k)}\to\bcero
\end{equation}
in $L^2(\Omega; \R^N)$, then $\bv^{(k)}\to\bcero$ in $\bbX$.
\end{lemma}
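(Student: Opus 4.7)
The natural strategy is to invoke Corollary \ref{bensedos} uniformly in $k$ and supplement it with the temporal-decay information encoded in the $H^1(\Omega)$-bound on $\{\bv^{(k)}\}$, which provides the ``final-time data'' needed to close the backward energy estimate on the unbounded time slab.

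Since $\|\bu^{(k)}\|_{W^{1,\infty}(\Omega;\R^N)}\le\omega$ uniformly, the constants $K=K(\omega)$ and $\gamma=\gamma(\omega)$ in Corollary \ref{bensedos} are uniform in $k$, and for every $T>0$ and every $t\in[0,T]$,
\begin{equation*}
\|\bv^{(k)}(\cdot,t)\|_{L^2(\R^n)}^2\le Ke^{\gamma(T-t)}\bigl(\|\bv^{(k)}(\cdot,T)\|_{L^2(\R^n)}^2+\|\bL_{\bu^{(k)}}\bv^{(k)}\|_{L^2(\Omega)}^2\bigr),
\end{equation*}
where the source contribution $\|\bL_{\bu^{(k)}}\bv^{(k)}\|_{L^2(\Omega)}^2$ vanishes in the limit by hypothesis.

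To exploit this on the unbounded time domain, I would extract ``good final times'' for the sequence. Setting $\phi_k(t):=\|\bv^{(k)}(\cdot,t)\|_{L^2(\R^n)}^2$, the uniform $H^1(\Omega)$-bound yields $\phi_k\in L^1(0,+\infty)$ uniformly (from $\|\bv^{(k)}\|_{L^2(\Omega)}^2\le M$) and $\phi_k'\in L^1(0,+\infty)$ uniformly (via $|\phi_k'(t)|\le 2\|\bv^{(k)}(\cdot,t)\|_{L^2}\|\partial_t\bv^{(k)}(\cdot,t)\|_{L^2}$ and Cauchy--Schwarz in $t$). So $\{\phi_k\}$ is bounded in $W^{1,1}(0,+\infty)$ and each $\phi_k$ tends to $0$ at $+\infty$; a mean-value selection then furnishes times $T_k$ along which $\phi_k(T_k)$ is as small as one wishes. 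Plugging these $T_k$ into the energy estimate and integrating in $t\in[0,T_k]$ should yield, after passing $k\to\infty$, the $L^2(\Omega;\R^N)$-convergence $\bv^{(k)}\to\bcero$. Convergence in the ambient space $\bbX=L^2\cap L^{2p}$ is then obtained by Sobolev interpolation between $L^2(\Omega)$ and $H^1(\Omega)$ in the admissible range of $p$, the uniform $H^1$-bound being combined with the $L^2$-convergence just proved.

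\textbf{The main obstacle} is the exponential factor $e^{\gamma T_k}$ competing against the smallness of $\phi_k(T_k)$: a naive mean-value bound gives only $\phi_k(T_k)\lesssim M/T_k$, which is crushed by $e^{\gamma T_k}$. Closing the argument seems to require a more delicate use of the $W^{1,1}$-structure of $\{\phi_k\}$---for instance, extracting via Helly's selection a pointwise limit $\phi^*\in W^{1,1}\cap L^1$, identifying a corresponding weak $H^1(\Omega)$-limit $\bv$ of $\{\bv^{(k)}\}$ as an $H^1$-solution of the homogeneous limit hyperbolic system $\bL_{\bu^*}\bv=\bcero$ (using Arzel\`a--Ascoli on the uniformly $W^{1,\infty}$-bounded $\{\bu^{(k)}\}$ to pass the coefficients $\bD\bbf_{(j)}^T(\bu^{(k)})$ to the limit), and then invoking the exact energy identity for the limit equation together with the time-decay $\phi^*(t)\to 0$ to force $\bv\equiv\bcero$; a compactness/uniqueness-of-the-limit argument would then upgrade the weak convergence to the strong $L^2(\Omega)$-convergence claimed in the lemma.
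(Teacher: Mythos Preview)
Your proposal is in the right spirit, and the fallback strategy you sketch in the final paragraph would work, but the paper's argument is considerably more economical and sidesteps the exponential obstacle you correctly flagged by a simple device you overlooked: instead of applying the backward estimate on all of $[0,T_k]$, apply Corollary~\ref{bensedos} only on a \emph{unit-length} interval $[T-1,T]$, so the factor $e^{\gamma(T-t)}$ is at most $e^\gamma$, independent of $T$. Concretely: pass to a subsequence with weak $H^1$-limit $\bv$; since $\bv\in H^1(\Omega)$, for any $\epsilon>0$ one can choose $T$ large with $Ke^\gamma\|\bv(\cdot,T)\|_{L^2}^2\le\epsilon$; the backward estimate on $[T-1,T]$, combined with $\bL_{\bu^{(k)}}\bv^{(k)}\to\bcero$ and convergence of the time-$T$ traces, gives $\|\bv^{(k)}(\cdot,t)\|_{L^2}^2\le 2\epsilon$ for $t\in[T-1,T]$ and $k$ large, hence $\bv\equiv\bcero$ on $[T-1,T]\times\R^n$. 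As $T$ is arbitrary, $\bv\equiv\bcero$, and the claimed convergence follows.

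Compared with your route, this avoids passing the coefficients $\bD\bbf_{(j)}^T(\bu^{(k)})$ to a limit via Arzel\`a--Ascoli and avoids analyzing the limit equation $\bL_{\bu^*}\bv=\bcero$: the energy estimate is applied directly to $\bv^{(k)}$ with the uniform constants $K,\gamma$, and compactness enters only to identify $\bv$. Your approach buys generality (it would survive weaker hypotheses on how $\bu^{(k)}$ converges), but at the cost of extra machinery that the paper simply does not need. One caveat that both arguments share: the upgrade from weak $H^1(\Omega)$-convergence to strong $L^2(\Omega)$-convergence on the unbounded domain $\Omega=(0,+\infty)\times\R^n$ is taken for granted in the paper's proof as well, without further justification.
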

\begin{proof}

Suppose that $\{\bu^{(k)}\}$ is unformly bounded in $\bbX$; $\{\bv^{(k)}\}$, uniformly bounded in $H^1(\Omega; \R^N)$; and \eqref{hfosss} holds. Without loss of generality, we can take for granted that $\bv^{(k)}$ converges weakly in $H^1(\Omega; \R^N)$ and strongly in $L^2(\Omega; \R^N)$ to some $\bv\in H^1(\Omega; \R^N)$. Under our assumptions, the constants $K$ and $\gamma$ of Corollary \ref{bensedos} do not depend on $k$. 

Take $\epsilon>0$.  Select $T=T(\epsilon)$ large enough so that 
$$
Ke^\gamma\|\bv(\cdot, T)\|^2_{L^2(\R^n; \R^N)}\le \epsilon.
$$
Corollary \ref{bensedos} lets us write
\begin{align}
\|\bv(\cdot, t)^{(k)}\|^2_{L^2(\R^n; \R^N)}\le K e^{\gamma(T-t)}&\left(\|\bv^{(k)}(\cdot, T)\|^2_{L^2(\R^n; \R^N)}\right.\nonumber\\
&\left.+\int_0^\infty\|\bL_{\bu^{(k)}}\bv^{(k)}(\tau)\|^2_{L^2(\R^n; \R^N)}\,d\tau\right).\nonumber
\end{align}
For $t\in[T-1, T]$ and $k$ large enough (depending on $\epsilon$), due to \eqref{hfosss},
$$
\|\bv(\cdot, t)^{(k)}\|^2_{L^2(\R^n; \R^N)}\le K e^\gamma\|\bv(\cdot, T)\|^2_{L^2(\R^n; \R^N)}+\epsilon\le2\epsilon.
$$
Hence we can conclude that $\bv=\bcero$  in $[T-1, T]\times\R^n$. The arbitrariness of $T$, implies that $\bv\equiv\bcero$ and $\bv^{(k)}\to\bcero$ in $\bbX$.
\end{proof}

Our main result on variational hyperbolicity follows. Note how this statement holds trivially when either of the two dimensions $N$ or $n$ is unity, so that it is meaningful for systems in high dimension. 
\begin{proposition}
If the set of fluxes $\bbf_{(j)}$ are such that their differentials commute with each other
\begin{equation}\label{conmutacion}
\bD\bbf_{(j)}(\bu)\,\bD\bbf_{(k)}(\bu)=\bD\bbf_{(k)}(\bu)\,\bD\bbf_{(j)}(\bu)
\end{equation}
for all $j, k$ and $\bu\in\R^N$, then system \eqref{ecuaini} is variationally hyperbolic.
\end{proposition}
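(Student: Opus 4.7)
The plan is to use the commutativity hypothesis to achieve an algebraic decoupling that effectively reduces the system to scalar transport equations, for which the required convergence does not rely on Lipschitz regularity of the coefficients.

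First I would observe that, since the Jacobians $\bD\bbf_{(j)}^T(\bu)$ commute pairwise and depend smoothly on $\bu$, they admit---at least generically and locally---a simultaneous diagonalization by a common smooth matrix $\bP(\bu)$, so that $\bD\bbf_{(j)}^T(\bu) = \bP(\bu)\Lambda_j(\bu)\bP(\bu)^{-1}$ with $\Lambda_j(\bu)$ diagonal. Consequently the full symbol $\bA(\bu,\xi) = \sum_j \xi_j \bD\bbf_{(j)}^T(\bu)$ is diagonalized by $\bP(\bu)$ \emph{independently} of $\xi$, and $\bS(\bu) = \bP(\bu)^{-T}\bP(\bu)^{-1}$ is a symmetric, positive definite, $\xi$-independent symbolic symmetrizer of the family $\bL_\bu$. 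This is the key structural gain from commutativity, and it is absent in the general Friedrichs-symmetrizable case.

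Next I would set $\bw^{(k)} = \bP(\bu^{(k)})^{-1}\bv^{(k)}$. The hypothesis $\bL_{\bu^{(k)}}\bv^{(k)}\to \bcero$ in $L^2(\Omega;\R^N)$ translates, component by component, into
\[
\partial_t w^{(k)}_i + \sum_{j=1}^{n} \lambda_{j,i}(\bu^{(k)})\,\partial_{x_j} w^{(k)}_i = g^{(k)}_i + R^{(k)}_i,
\]
where $g^{(k)}_i\to 0$ in $L^2(\Omega)$ (as the corresponding component of $\bP(\bu^{(k)})^{-1}$ applied to the vanishing forcing), and $R^{(k)}_i$ collects the error terms produced by differentiating $\bP(\bu^{(k)})$ with respect to $(t,\bx)$. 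For each $i$ we thus have a \emph{scalar} transport equation with bounded, smooth-in-$\bu$ coefficients, a setting in which the DiPerna--Lions/Ambrosio renormalization theory provides $L^2$ stability under regularity on coefficients far weaker than $W^{1,\infty}$. Hence, if $R^{(k)}_i$ is negligible in the limit, one obtains $w^{(k)}_i \to 0$ in $L^2_{\mathrm{loc}}$, and the identity $\bv^{(k)} = \bP(\bu^{(k)})\bw^{(k)}$ then transfers this to $\bv^{(k)}\to\bcero$ in $L^2(\Omega;\R^N)$.

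The hard part---and the step that really uses the commutativity identity \eqref{conmutacion}---is the control of $R^{(k)}_i$. Formally this remainder contains $\nabla_{(t,\bx)} \bP(\bu^{(k)})$, hence $\nabla_{(t,\bx)} \bu^{(k)}$, which is \emph{not} controlled by the boundedness of $\bu^{(k)}$ in $\bbX$. The expectation is that commutativity forces all such rough contributions to assemble into divergence-type expressions $\sum_j \partial_{x_j}(\bH_j(\bu^{(k)},\bw^{(k)}))$ with smooth $\bH_j$, so that testing against $\bv^{(k)}$ and integrating by parts moves the rough derivative onto $\bw^{(k)}$ itself; the $H^1$-bound on $\bv^{(k)}$, together with Rellich compactness, then drives these terms to zero. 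Without this cancellation one falls back into the scope of the preceding Lemma, where a uniform $W^{1,\infty}$ bound on $\bu^{(k)}$ is genuinely required, and which therefore cannot cover the merely $\bbX$-bounded situation. The main obstacle is thus the clean verification of the divergence structure of $R^{(k)}_i$ from the algebraic identity $\bD\bbf_{(j)}\,\bD\bbf_{(k)} = \bD\bbf_{(k)}\,\bD\bbf_{(j)}$.
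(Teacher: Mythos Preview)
Your global strategy coincides with the paper's: exploit commutativity to obtain a common eigenbasis $\{\rv_l(\bu)\}$ for all the $\bD\bbf_{(j)}^T(\bu)$, expand $\bv^{(k)}=\sum_l v_l^{(k)}\rv_l(\bu^{(k)})$, and reduce to $N$ decoupled scalar transport equations. The paper packages this as a four-step ladder (scalar $n=1$; scalar $n\ge1$; vector $n=1$; vector $n\ge1$).

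The substantive differences are two. First, for the scalar building block the paper does not use DiPerna--Lions renormalization. It replaces the rough coefficient $f'(u^{(k)})$ by a Lipschitz approximant $U^{(k)}$ with $\|U^{(k)}-f'(u^{(k)})\|_{L^2}\to0$ (at the price of $\|U^{(k)}\|_{W^{1,\infty}}\to\infty$), runs classical characteristics for the approximating equation, and combines this with the strong $L^2$ limit $v^{(k)}\to v$ coming from the $H^1$ bound to force $v\equiv0$. Your renormalization route is a legitimate alternative, but note that its hypotheses (e.g.\ $\operatorname{div}$ of the transport field in $L^1_{\mathrm{loc}}$) are no more available here than Lipschitz regularity, so it does not obviously buy you more.

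Second, and more importantly, the paper does \emph{not} seek the divergence-form cancellation you conjecture for the remainder $R^{(k)}_i$. Instead, already in the scalar model of Step~1 it carries an extra zeroth-order term $g(t,x,u^{(k)})v^{(k)}$, and in Steps~3--4 it asserts that the change-of-basis contributions land in that slot; $g$ is then handled by the same Lipschitz-approximation trick and the explicit ODE formula along characteristics. Your worry is well placed, though: the derivatives of $\rv_l(\bu^{(k)})$ formally bring in $\nabla_{(t,\bx)}\bu^{(k)}$, which is not bounded in $L^2$ when $\bu^{(k)}$ is merely in $\bbX$, and the paper's proof does not spell out why these terms genuinely fit the form $g(t,x,u)v$ with $g\in L^2$. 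So while your proposal leaves the remainder unresolved, the paper's treatment of that same point is also formal; your proposed divergence-structure mechanism is simply not the route the paper takes.
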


\begin{proof}
We proceed in four steps. 

1. The scalar case in dimension 1. 
Suppose that $\{u^{(j)}\}$, a bounded sequence in $\bbX$; $F_j\to0$ in $L^2(\Omega)$; and $\{v^{(j)}\}\subset H^1(\Omega)$, a bounded sequence, are such that
$$
v^{(j)}_t+f'(u^{(j)})v^{(j)}_x+g(t, x, u^{(j)})v^{(j)} =F_j\hbox{ in }\Omega,
$$
for certain smooth functions $f'$ and $g$. If $v^{(j)}\to v$ in $L^2(\Omega)$ (possibly for a suitable subsequence), 
we would like to conclude that in fact $v\equiv0$, and $v^{(j)}\to0$ in $L^2(\Omega)$.

Since, on the other hand, the sequence $\{f'(u^{(j)})\}$ is uniformly bounded in $L^2(\Omega)$, we can always, due to the density of smooth functions in $L^2(\Omega)$, find a certain $U^{(j)}\in W^{1, \infty}(\Omega)$ such that
$$
\|U^{(j)}\|_{W^{1, \infty}(\Omega)}\to\infty,\quad \|U^{(j)}-f'(u^{(j)})\|_{L^2(\Omega)}\to0.
$$
Same argument applies to $\{g(t, x, u^{(j)}\}$. 
Hence, we can assume, without loss of generality, that
\begin{equation}\label{modificado}
v^{(j)}_t+U^{(j)}v^{(j)}_x+V^{(j)}v^{(j)}=\overline F_j\hbox{ in }\Omega,
\end{equation}
where $\{U^{(j)}\}$ and  $\{V^{(j)}\}$ are sequences of Lipschitz functions, uniformly bounded in $L^2(\Omega)$, and $\overline F_j\to0$ in $L^2(\Omega)$. 

We are now entitled to use characteristics, which are the solutions $x^{(j)}$ of
$$
(x^{(j)})'(s)=U^{(j)}(s, x^{(j)}(s))\quad s>0.
$$
Equation \eqref{modificado} implies that 
\begin{equation}\label{deriv}
\frac d{ds}v^{(j)}(s, x^{(j)}(s))+V^{(j)}v^{(j)}(s, x^{(j)}(s))=v^{(j)}_t+U^{(j)}v^{(j)}_x+V^{(j)}v^{(j)}\to0
\end{equation}
as $j\to\infty$ for all $s$. This condition means, if we set $\overline v^{(j)}(s)=v^{(j)}(s, x^{(j)}(s))$, that
$$
\frac d{ds}\overline v^{(j)}+V^{(j)}\overline v^{(j)}\to0
$$
as $j\to\infty$ for all $s$. 
The explicit formula for the solution of non-homogeneous, linear, first-order ODE (for instance) allows us to conclude that 
$$
v^{(j)}(t, x^{(j)}(t))\to0\hbox{ for all }t,
$$
if for some fixed $s$,
$$
v^{(j)}(t, x^{(j)}(s))\to0.
$$
Take an arbitrary fixed time $t$, and two characteristics $x^{(j)}$ and $y^{(j)}$. For $s>t$, 
\begin{align}
v^{(j)}(t, x^{(j)}(t))-v^{(j)}(t, y^{(j)}(t))=\,\,&
v^{(j)}(t, x^{(j)}(t))-v^{(j)}(s, x^{(j)}(s))\nonumber\\
&+v^{(j)}(s, x^{(j)}(s))-v(s, x^{(j)}(s))\nonumber\\
&+v(s, x^{(j)}(s))-v(s, y^{(j)}(s))\nonumber\\
&+v(s, y^{(j)}(s))-v^{(j)}(s, y^{(j)}(s))\nonumber\\
&+v^{(j)}(s, y^{(j)}(s))-v^{(j)}(t, y^{(j)}(t)).\nonumber
\end{align}
Because $v\in L^2(\Omega)$, select $s$ sufficiently large so that the third term in this sum is small, independently of $j$. Because $v^{(j)}\to v$ in $L^2(\Omega)$, take $j$ large enough to have that the second and fourth terms are small. By the prior discussion, once $v^{(j)}(s, y^{(j)}(s))$ and $v^{(j)}(s, x^{(j)}(s))$ are small, then both 
$$
v^{(j)}(t, x^{(j)}(t)),\quad v^{(j)}(t, x^{(j)}(t))
$$
are also small for $j$ large. Altogether we see that 
$$
v^{(j)}(t, x^{(j)}(t))-v^{(j)}(t, y^{(j)}(t))\to0\hbox{ as }j\to\infty,
$$
and because of the arbitrariness of $t$ and the two characteristics considered, given that $v^{(j)}\in L^2(\Omega)$, we can conclude that 
$$
v^{(j)}(t, x)\to0
$$
for a.e. $(t, x)$. This implies that the limit function $v$ vanishes identically, and then $v^{(j)}\to0$ in $L^2(\Omega)$. 

2. The scalar case in dimension $n$. This is formally the same as the previous case. The equation would be
$$
v^{(k)}_t+\sum_j f'_{(j)}(u^{(k)})v^{(k)}_{x_j}+g(t, \bx, u^{(k)})v^{(k)} =F_k\hbox{ in }\Omega,
$$
and we can use characteristics in just the same way as before. 

3. The vector case in dimension 1. This time we have
$$
\bv^{(k)}_t+\bD\bbf^T(\bu^{(k)})\bv_x+\bbg(t, x, \bu^{(k)})=\F_k,
$$
with $\bD\bbf^T(\bu)$ diagonalizedable
$$
\bD\bbf^T(\bu)\rv_l(\bu)=\lambda_l(\bu)\rv_l(\bu).
$$
It is standard that if we put
$$
\bv^{(k)}(t, x)=\sum_{l=1}^N v_l^{(k)}(t, x)\rv_l(\bu^{(k)}),
$$
then decomposition of all fields in the basis $\{\rv_l\}$ leads exactly to a family of decoupled scalar equations in dimension 1 for each coefficient $v_l^{(k)}$ of the type examined in the first step. We can conclude then that $v_l^{(k)}\to0$ for each $l$ as $k\to\infty$. This leads to $\bv^{(k)}\to\bcero$ as well.

4. The multi-dimensional, vector case. It is an elementary fact in Linear Algebra that the commutation condition \eqref{conmutacion} implies that there exists a common basis of eigenvectors $\rv_l(\bu)$, $l=1, 2, \dots, N$, for all $N\times N$-matrices $\bD\bbf_{(j)}(\bu)$. In this case, it is also standard to check that system \eqref{hfos} decouples into $N$ scalar equations in dimension $n$ if we write, as in the previous step,
$$
\bv(t, \bx)=\sum_{l=1}^N v_l(t, \bx)\rv_l(\bu)
$$
then each $v_l(t, \bx)$ is the solution of a scalar equation of the type explored in Step 2. We again conclude that $\bv^{(k)}\to\bcero$ as $k\to\infty$.
\end{proof}

\section{Critique to the concept of  Young-measure solution}\label{diserr}
We would like to argue that Definition \ref{sec} of a Young-measure valued solution ought to be improved. The intuitive reason is that knowing or fixing a finite number of moments of a probability measure, as expressed in \eqref{momentos}, does not say much about the underlying probability measures themselves. Indeed, \eqref{debily} and \eqref{momentos} does not impose much about the Young-measure valued solution $\nu$. Even more so, one could change drastically the whole family $\nu$, as long as preserving the integrals \eqref{momentos}, and we would have another such Young-measure solution. 
Indeed, we could find a pair of fields $(\overline\bu, \overline\bbf)$ verifying \eqref{debily}, and from them all families of probability measures furnishing the representation in \eqref{momentos}. All those would be Young measure solutions. 
We believe this is not appropriate because the structure of $\nu$, or its support, does not seem to play a role as far as the system \eqref{ecuaini} is concerned. 

Our proposal for a new definition of Young-measure solutions requires to recall the original concept of a Young measure as it relates to sequences of generating functions or fields, just as we have done in Section \ref{young}. 
Our idea is that a Young measure solution of \eqref{ecuaini} must be a family of probability measures, a Young measure, that can be generated, as in Theorem \ref{genera}, and possibly among many other possibilities, by a sequence of nearly solutions of \eqref{ecuaini} in the following sense. 
\begin{definition}\label{nueva}
A family of probability measures $\nu=\{\nu_{(t, \bx)}\}_{(t, \bx)\in\Omega}$, supported in $\R^N$, 
is a strong Young-measure solution of \eqref{ecuaini} if it can be generated, as the Young measure, by a sequence of fields $\{\bu^{(k)}\}$ such that $E(\bu^{(k)})\searrow0$, and both sequences $\{\bu^{(k)}\}$, $\{\bbf(\bu^{(k)})\}$ are weakly convergent in $L^2(\Omega; \R^N)$ and $L^2(\Omega; \R^{N\times n})$, respectively. 
\end{definition}

Note how the requirement $E(\bu^{(k)})\searrow0$ is a precise way of saying that $\{\bu^{(k)}\}$ is a sequence of nearly solutions of  \eqref{ecuaini}. 
This definition is coherent with Definition \ref{sec}. 
\begin{proposition}\label{impli}
Let $\nu=\{\nu_{(t, \bx)}\}_{(t, \bx)\in\Omega}$ be a strong Young measure-valued solution according to Definition \ref{nueva}. Then it is also a measure-valued solution. 
\end{proposition}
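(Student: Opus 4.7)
The plan is to pass to the limit in the variational identity \eqref{primii} defining the defect $\bv^{(k)}$ associated with the generating sequence $\{\bu^{(k)}\}$, and show that the limiting identity is exactly \eqref{debily}.

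First, fix a strong Young-measure solution $\nu$ generated by $\{\bu^{(k)}\}$ as in Definition \ref{nueva}. By the defining property of Young measures (Definition \ref{YM}), the weak limits of $\{\bu^{(k)}\}$ in $L^2(\Omega;\R^N)$ and of $\{\bbf(\bu^{(k)})\}$ in $L^2(\Omega;\R^{N\times n})$ must coincide respectively with the first moments
$$
\overline\bu(t,\bx)=\int_{\R^N}\bz\,d\nu_{(t,\bx)}(\bz),\qquad \overline\bbf(t,\bx)=\int_{\R^N}\bbf(\bz)\,d\nu_{(t,\bx)}(\bz),
$$
since weak convergence is assumed in the definition and the representation formula \eqref{represen} applies with $\Psi=\mathrm{id}$ and $\Psi=\bbf$.

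Next, let $\bv^{(k)}\in H^1(\Omega;\R^N)$ be the defect associated with $\bu^{(k)}$ through \eqref{primii}. The hypothesis $E(\bu^{(k)})\searrow 0$ is, by the very definition \eqref{funcerr}, the statement
$$
\partial_t\bv^{(k)}\to\bcero\text{ in }L^2(\Omega;\R^N),\qquad \nabla_\bx\bv^{(k)}\to\bcero\text{ in }L^2(\Omega;\R^{N\times n}).
$$
I would then fix an arbitrary test field $\bw\in H^1(\Omega;\R^N)$ and pass to the limit $k\to\infty$ in the identity
$$
\int_\Omega[(\bu^{(k)}+\partial_t\bv^{(k)})\cdot\partial_t\bw+(\bbf(\bu^{(k)})+\nabla_\bx\bv^{(k)}):\nabla_\bx\bw]\,dt\,d\bx+\int_{\R^n}\bu_0(\bx)\cdot\bw(0,\bx)\,d\bx=0.
$$
The two defect terms vanish in the limit by the Cauchy--Schwarz inequality (strong-against-bounded), while the terms involving $\bu^{(k)}$ and $\bbf(\bu^{(k)})$ converge to the corresponding moment terms by weak convergence (weak-against-strong, since $\partial_t\bw$ and $\nabla_\bx\bw$ are fixed $L^2$ functions). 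The boundary term at $t=0$ is independent of $k$. The resulting identity is exactly \eqref{debily}, so $\nu$ satisfies Definition \ref{sec}.

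There is essentially no obstacle here: the statement is really a bookkeeping verification that the defect framework \eqref{primii}, in the limit of vanishing defect norms, collapses to the averaged weak formulation. The only point worth being careful about is the identification of the weak $L^2$ limits with the first moments of $\nu$, which is where the generation hypothesis $\{\bu^{(k)}\}\sim\nu$ from Definition \ref{nueva} is used.
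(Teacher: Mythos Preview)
Your proof is correct and follows essentially the same route as the paper's: both write down the defect identity \eqref{primii} for the generating sequence, use $E(\bu^{(k)})\searrow 0$ to kill the $\partial_t\bv^{(k)}$ and $\nabla_\bx\bv^{(k)}$ terms, and invoke the assumed weak convergence of $\{\bu^{(k)}\}$ and $\{\bbf(\bu^{(k)})\}$ together with the Young-measure representation to recover \eqref{debily}. If anything, you are slightly more explicit than the paper in naming the mechanisms (Cauchy--Schwarz for the defect terms, weak-against-fixed for the moment terms, and the identification of the weak limits with the first moments via \eqref{represen}).
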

\begin{proof}
Our Definition \ref{nueva} implies the existence of a sequence of fields $\bu^{(k)}$ such that 
$$
\int_\Omega[ (\bu^{(k)}+\partial_t\bv^{(k)})\cdot\partial_t \bw+(\bbf(\bu^{(k)})+\nabla_\bx \bv^{(k)}):\nabla_\bx\bw]\,dt\,d\bx+\int_{\R^n} \bu_0(\bx)\cdot\bw(0, \bx)\,d\bx=0
$$
for all $\bw\in H^1(\Omega; \R^N)$ in such a way that $\bv^{(k)}\to\bcero$ in $H^1(\Omega; \R^N)$. Recall that 
$$
E(\bu^{(k)})=\frac12\|\bv^{(k)}\|^2_{H^1(\Omega; \R^N)}.
$$
Taking limits in this identity, and bearing in mind the hypotheses on the  weak convergence of $\{\bu^{(k)}\}$, $\{\bbf(\bu^{(k)})\}$, we can directly conclude, given the representation formula of weak limits in terms of the underlying Young measure, that 
\eqref{debil} holds in the average
$$
\int_\Omega [\overline\bu(t, \bx)\cdot\partial_t \bw(t, \bx)+\overline\bbf(t, \bx):\nabla_\bx\bw(t, \bx)]\,dt\,d\bx+\int_{\R^n}\bw(0, \bx)\cdot\bu_0(\bx)\,d\bx=0
$$
for all test fields $\bw(t, \bx)\in H^1(\Omega; \R^N)$, where
$$
\overline\bu(t, \bx)=\int_{\R^N}\bz \,d\nu_{(t, \bx)}(\bz),
\quad \overline\bbf(t, \bx)=\int_{\R^N}\bbf(\bz) \,d\nu_{(t, \bx)}(\bz).
$$
After all, $\bv^{(k)}\to\bcero$ in $H^1(\Omega; \R^N)$ obviously implies that $\bv^{(k)}\rightharpoonup\bcero$ in $H^1(\Omega; \R^N)$ as required by Proposition \ref{yms}. 
\end{proof}
The important point we would like to stress is that our Definition \ref{nueva} demands, for a Young measure solution, to be generated by at least one  sequence of fields $\{\bu^{(k)}\}$ whose defects $\{\bv^{(k)}\}$ converge to zero strongly in $H^1(\Omega; \R^N)$, and not just weakly as in the preceding statement. 

Is there an explicit way to test if a given family of probability measures $\nu=\{\nu_{(t, \bx)}\}_{(t, \bx)\in\Omega}$ complies with Definition \ref{nueva}? This is, we believe, a profound question that can hardly be answered at this stage without further insight. However, in practice a Young measure $\nu=\{\nu_{(t, \bx)}\}_{(t, \bx)\in\Omega}$ is never found neatly as such, but we are to be satisfied with a sequence $\{\bu_j\}$ of nearly solutions, and these can be found through the flow of $E$, or through direct numerical approximations. As a matter of fact, we conjecture that the flow of a variationally hyperbolic system \eqref{ecuaini} will always generate a strong Young measure solution, i.e. we will always have, under the variationally hyperbolic condition, that
$$
E'(\bu^{(j)})\to0\hbox{ implies }E(\bu^{(j)})\to0.
$$
We would like to support the plausability of this conjecture by examining the simple case of a scalar equation in dimension one, where computations can be made much more explicit.

\section{The case of a single equation in dimension one}\label{cinco}
As a way to better understand our proposal, and gain some familiarity with the method, 
it is quite instructive to examine the case of a single conservation law in one space dimension where many facts are better known. 
In particular, it is well-known, under suitable assumptions, that there is a unique entropy solution that can be approximated by the viscosity method. See \cite{bressan}, for instance. 

We hence focus on a non-linear, scalar conservation law
\begin{equation}\label{ecuaini1}
u_t(t, x)+[f(u(t, x))]_x=0\hbox{ in }\Omega\equiv(0, +\infty)\times\R,\quad u(0, x)=u_0(x).
\end{equation}
The unknown is $u(t, x):(0, +\infty)\times\R\to\R$, $f:\R\to\R$ is assumed to be smooth and have polynomial growth of some degree $p\ge1$ at infinity, and $u_0:\R\to\R$ is the function determining the initial value. $u_0$ is 
assumed to belong to $L^2(\R)$. 

Our basic functional space is $\bbX\equiv L^2(\Omega)\cap L^{2p}(\Omega)$. For $u\in\bbX$, define $v(t, x)\in H^1(\Omega)$, which we will identify as its ``defect", as the unique solution of the problem
\begin{equation}\label{primi}
\int_\Omega[ (u+v_t)w_t+(f(u)+v_x)w_x]\,dt\,dx+\int_{\R} u_0(x)w(0, x)\,dx=0
\end{equation}
for all test functions $w\in H^1(\Omega)$. 
As before, we put
\begin{equation}\label{funcer}
E(u)=\frac12\int_\Omega(v_t^2+v_x^2)\,dt\,dx,
\end{equation}
and find that 
$$
E'(u)=-(v_t+f'(u)v_x).
$$

Consider the flow of $E$
\begin{equation}\label{flujo}
\frac {d\bu}{ds}=-E'(\bu)\hbox{ for }t>0,\quad \bu(0)=u^{(0)}(t, x),
\end{equation}
for arbitrary $u^{(0)}\in\bbX$. If we put $u(s, t, x)=\bu(s)$, then 
\begin{equation}\label{flujo}
\frac{du}{ds}=v_t+f'(u)v_x,\quad u(0, t, x)=u^{(0)}(t, x),
\end{equation}
where $v(s, t, x)$ is the corresponding defect for $u(s, t, x)$ so that \eqref{primi} holds for every $s>0$. 

\subsection{The numerical approximation}\label{seis}
The basic property of our error functional allows for a well-founded numerical scheme to approximate weak solutions of the conservation law \eqref{ecuaini}. It is based on a standard steepest-descent strategy just as in \eqref{flujo}. All of the necessary calculations have been performed earlier. It amounts to the following iterative algorithm. It is worthwhile to highlight that the way in which the defect is defined makes a software like FreeFem (\cite{freefem}) an ideal tool to implement this scheme.

\begin{enumerate}
\item Initialization. Take an arbitrary $u^{(0)}\in\bbX$.
\item Main iterative step until convergence. Once $u^{(j)}$ is known,
\begin{enumerate}
\item compute the defect $v^{(j)}$ by solving \eqref{primi} for $u\equiv u^{(j)}$, namely
$$
\int_\Omega[ (u^{(j)}+v^{(j)}_t)w_t+(f(u^{(j)})+v^{(j)}_x)w_x]\,dt\,dx-\int_{\R} u_0w\,dx=0
$$
for all test functions $w\in H^1(\Omega)$;
\item take 
$$
U^{(j)}=v^{(j)}_t+f'(u^{(j)})v^{(j)}_x,
$$
and find an approximation $\epsilon_j$ of $\epsilon>0$ so that $E'(u^{(j)}+\epsilon U^{(j)})=\bcero$; we can take 
$$
\epsilon_j=-\frac{\int_\Omega(v^{(j)}_tV^{(j)}_t+v^{(j)}_xV^{(j)}_x)\,dt\,dx}{\int_\Omega((V^{(j)}_t)^2+(V^{(j)}_x)^2)\,dt\,dx}
$$
where $V^{(j)}$ solves 
$$
\int_\Omega[ (U^{(j)}+V^{(j)}_t)w_t+(f'(u^{(j)})U^{(j)}+V^{(j)}_x)w_x]\,dt\,dx=0,
$$
for all test functions $w\in H^1(\Omega)$;
\item update 
$$
u^{(j+1)}=u^{(j)}+\epsilon_j U^{(j)}.
$$
\end{enumerate}
\end{enumerate}
The somewhat surprising fact is that this numerical scheme does not get stuck in local minima or any other kind of critical point of the error functional $E$ other than a global minimizer: the iterations will steadily lead down the error to zero. The only issue is that, depending on the initial guess, we might be approximating different weak solutions, different strong Young measure solutions, or no solution at all. 

The procedure is so simple to implement that it is worth looking at some numerical experiments. In practice, one can monitor the values $E(u^{(j)})$ to check if they are going down to zero, even if one does not have a proof of this result. The single fact that $E((u^{(j)})\searrow0$, despite further considerations, implies that $u^{(j)}$ is a good approximation of a true weak solution, a strong Young measure solution, or no solution at all in case $\{u^{(j)}\}$ does not remain in a bounded set.

\subsection{Some numerical experiments}
Numerical simulations for simplified situations require a finite domain in the spatial variable $x$, so that boundary condition on the two end-points of the one-dimensional domain are to be enforced appropriately. We focus on typical Riemann problems for Burguer's equation for the quadratic flux function 
$$
f(u)=\frac12u^2.
$$

We will take this time $\Omega=[-1, 1]\times(0, 1)$, taking $T=1$, and $I=(-1, 1)$ as the spatial domain. We would like to find an approximation of the problem
\begin{gather}
u_t+uu_x=0\hbox{ in }\Omega,\nonumber\\
u(0, x)=u_0(x),\quad u(t, -1)=u_L(t),\quad u(t, 1)=u_R(t),\nonumber
\end{gather}
where $u_0$ is the initial value, and $u_{L, R}$ are the boundary data. In the case we take both $u_L$, and $u_R$ as constant values, and
$$
u_0(x)=\begin{cases}u_L,& x<0,\\ u_R,& x>0,\end{cases}
$$
we are solving a typical Riemann problem. 
Instead of taking \eqref{primi} to determine the defect $v$, we take
\begin{align}
\int_\Omega&[ (u+v_t)w_t+(f(u)+v_x)w_x]\,dt\,dx\nonumber\\
&+\int_{t=0} u_0w\,dx-\int_{x=1} f(u_R)w\,dt+\int_{x=-1} f(u_L)w\,dt=0\nonumber
\end{align}
for all test functions $w\in H^1(\Omega)$ with $w(1, x)=0$ for all $x\in I$. Everything else is just like in the situation for all of space. 

We have used values $u_R, u_L$ equal to $\pm 1$. In the case $u_L=-1$, $u_R=1$, the entropy solution is a rarefaction wave. Figure \ref{rar} shows the solution for an initial guess $u^{(0)}\equiv0$. If, on the contrary, we take $u_L=1$, $u_R=-1$, then the entropy solution is, in this case, a stationary shock. For a vanishing initial guess, our above algorithm yields Figure \ref{shock}. In these examples, there is a clear overshooting effect that would have to be corrected for more accurate or more involved simulations. 

\begin{figure}[b]
\includegraphics[scale=0.5]{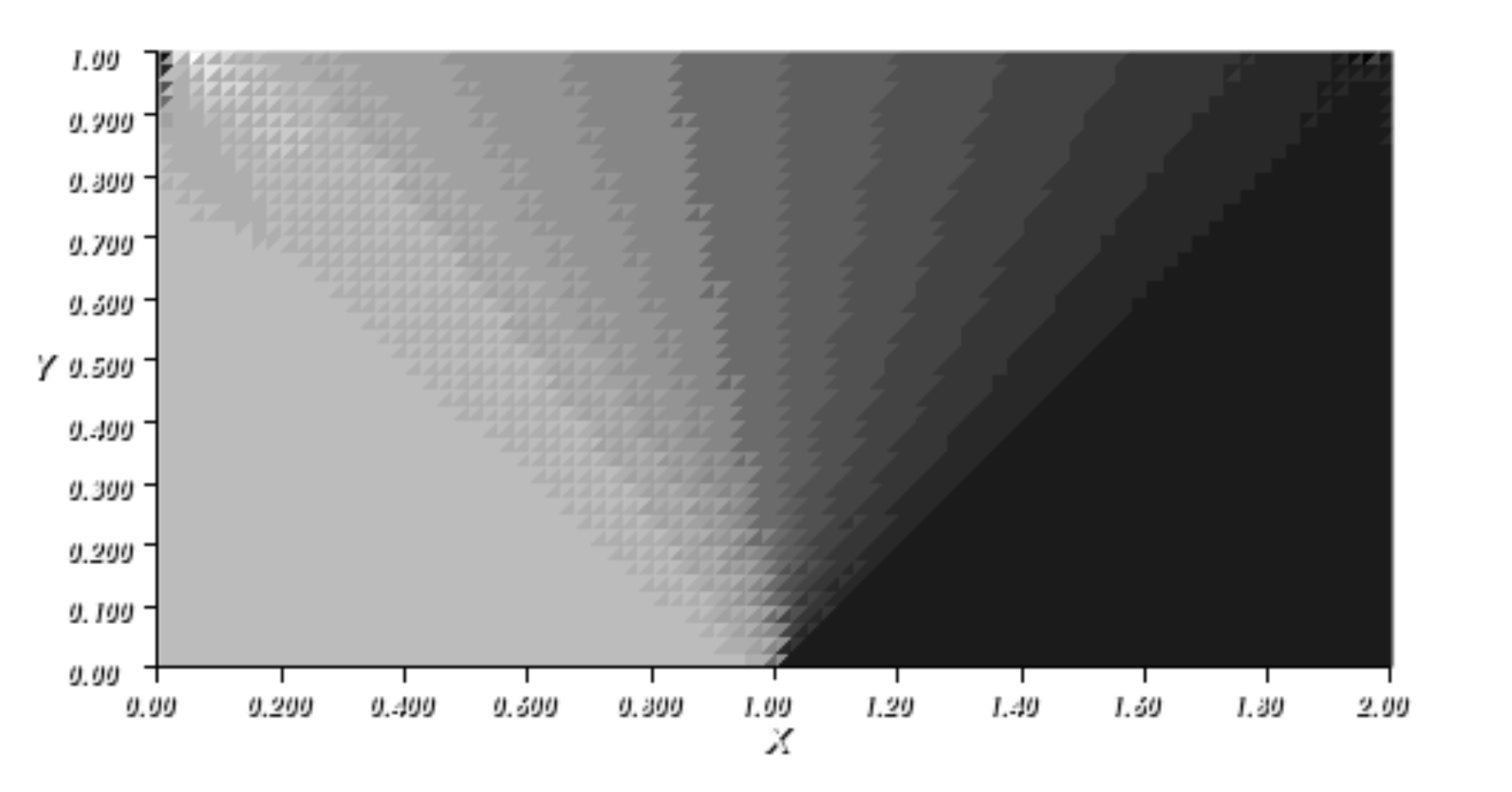}
\caption{A rarefaction wave.}
\label{rar}       
\end{figure}

\begin{figure}[b]
\includegraphics[scale=0.5]{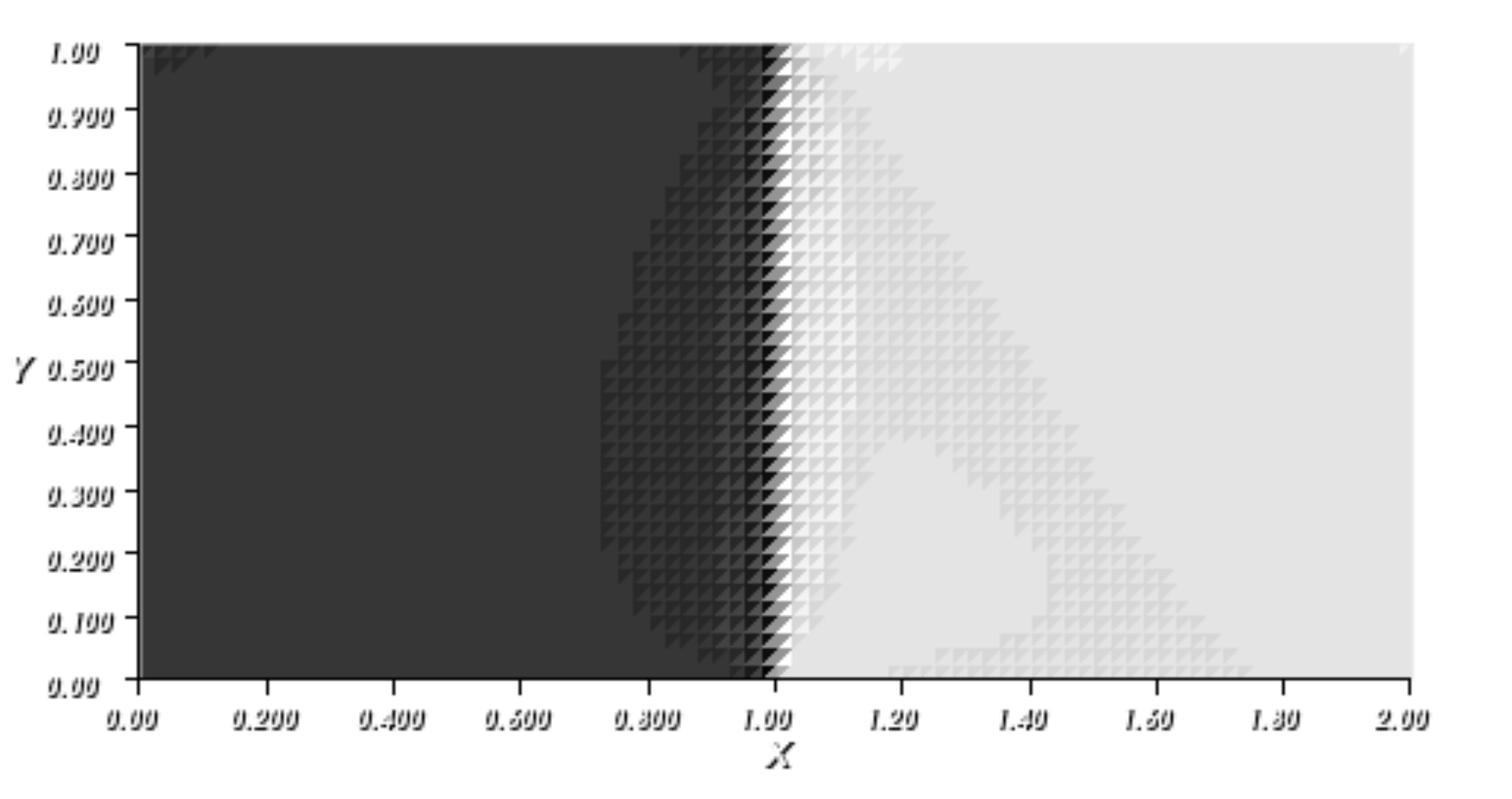}
\caption{A stable shock.}
\label{shock}       
\end{figure}

%
%
%

\subsection{The entropy condition}\label{entropia}
It is well-known that weak solutions of \eqref{ecuaini1}, i.e. satisfying \eqref{primi}, are non-unique. As a matter of fact, there are quite often, infinitely many weak solutions of that kind. A selection criterium needs to be used to choose the physically relevant solution from the full set of solutions. This is well-established through the entropy condition. See \cite{EvansB}, for instance, and references therein. 

Since our approach is variational, we suggest to use a typical variational selection criterium as in analogous situations of non-uniqueness of minimizers. 
Our situation is, at least formally, similar to some of those problems (Cahn-Hilliard models related to phase separation, gradient theory of phase transitions, etc \cite{modica}, \cite{shafrir}). We have a full set of minimizers of a certain error functional $E$, and so we pretend to select the good solutions by proposing a singular perturbation of such functional. Precisely, we will consider the perturbed functional
\begin{equation}\label{perturbacion1}
E_\epsilon(u)=\frac{\epsilon^2}2\int_\Omega [u_t(t, x)^2+u_x(t, x)^2]\,dt\,dx+ E(u)
\end{equation}
for $u\in H^1(\Omega)$ with $u(0, x)=u_0(x)$, so that we assume $u_0\in H^{1/2}(\R)$. Because $E(u)$ is a lower-order functional, and so a compact perturbation of the higher-order term depending on first derivatives, there are always minimizers $u^{(\epsilon)}$ of $E_\epsilon$ for each positive $\epsilon$. 

Because of an innocent mismatch of a minus sign, we will change, in this section, the sign on the definition of the defect $v$ in \eqref{primi}, and set
$$
\int_\Omega[ (u-v_t)w_t+(f(u)-v_x)w_x]\,dt\,dx+\int_{\R} u_0(x)w(0, x)\,dx=0
$$
for all test functions $w\in H^1(\Omega)$. 

\begin{theorem}
Let $u^{(\epsilon)}$ be a branch of minimizers of $E_\epsilon$ over $H^1(\Omega)$ under the initial condition $u(0, x)=u_0(x)$, such that it converges pointwise for  a.e. $(t, x)\in\Omega$ to some $u$. Then $u$ is an entropy solution of our problem \eqref{ecuaini}.
\end{theorem}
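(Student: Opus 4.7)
The plan is to couple the Euler--Lagrange equation for $u^{(\epsilon)}$ with the defect identity satisfied by the associated $v^{(\epsilon)}$. Stationarity of $E_\epsilon$, combined with the variational identity defining the defect after one integration by parts, yields the two distributional equations
\[
-\epsilon^2\Delta u^{(\epsilon)}+v^{(\epsilon)}_t+f'(u^{(\epsilon)})v^{(\epsilon)}_x=0,\qquad \Delta v^{(\epsilon)}=u^{(\epsilon)}_t+f(u^{(\epsilon)})_x.
\]
First I would establish the a priori bounds: by minimality against suitable smooth competitors (a mollification of a known weak solution, together with a diagonal argument in $\epsilon$), one obtains $E_\epsilon(u^{(\epsilon)})\to 0$, whence $v^{(\epsilon)}\to 0$ strongly in $H^1(\Omega)$ and $\epsilon\|u^{(\epsilon)}\|_{H^1(\Omega)}\to 0$. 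The hypothesis $u^{(\epsilon)}\to u$ a.e., combined with the polynomial growth of $f$ and the $L^{2p}$ control built into $\bbX$, yields $f(u^{(\epsilon)})\to f(u)$ in $L^2_{\mathrm{loc}}(\Omega)$ by dominated (or Vitali) convergence, so passing to the limit in the weak formulation \eqref{primi} immediately shows that $u$ is a weak solution of \eqref{ecuaini1}.

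To upgrade this to entropy admissibility, fix a smooth convex entropy $\eta$ with associated flux $q$ satisfying $q'=\eta'f'$. Multiplying the defect identity by $\eta'(u^{(\epsilon)})$ produces the exact pointwise identity $\partial_t\eta(u^{(\epsilon)})+\partial_xq(u^{(\epsilon)})=\eta'(u^{(\epsilon)})\Delta v^{(\epsilon)}$. Pairing with $\phi\in C^\infty_c(\Omega)$, $\phi\ge 0$, and integrating by parts on the right-hand side,
\[
\int_\Omega[\eta(u^{(\epsilon)})\phi_t+q(u^{(\epsilon)})\phi_x]\,dt\,dx=\int_\Omega\eta''(u^{(\epsilon)})\phi\,\nabla u^{(\epsilon)}\cdot\nabla v^{(\epsilon)}\,dt\,dx+o(1),
\]
since $\int_\Omega\eta'(u^{(\epsilon)})\nabla\phi\cdot\nabla v^{(\epsilon)}\,dt\,dx\to 0$ from $\nabla v^{(\epsilon)}\to 0$ in $L^2(\Omega)$. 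Passing to the limit on the left via the pointwise convergence of $u^{(\epsilon)}$, the entropy inequality $\partial_t\eta(u)+\partial_xq(u)\le 0$ will follow provided that the right-hand side has non-negative $\liminf$.

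The critical and hardest step is therefore the sign control
\[
\liminf_{\epsilon\to 0}\int_\Omega\eta''(u^{(\epsilon)})\phi\,\nabla u^{(\epsilon)}\cdot\nabla v^{(\epsilon)}\,dt\,dx\;\ge\;0.
\]
Neither factor alone suffices: $\|\nabla u^{(\epsilon)}\|_{L^2}$ may be as large as $O(\epsilon^{-1})$ while $\|\nabla v^{(\epsilon)}\|_{L^2}=O(\epsilon)$, so the integrand is only borderline $O(1)$ and its sign is not apparent from the estimates in isolation. My strategy is to insert the Euler--Lagrange equation $v^{(\epsilon)}_t+f'(u^{(\epsilon)})v^{(\epsilon)}_x=\epsilon^2\Delta u^{(\epsilon)}$ to rewrite the component of $\nabla v^{(\epsilon)}$ along the characteristic direction $(1,f'(u^{(\epsilon)}))$ in terms of $\epsilon^2\Delta u^{(\epsilon)}$; after one further integration by parts, convexity $\eta''\ge 0$ together with this substitution delivers the ``good'' dissipation $\epsilon^2\int\eta''(u^{(\epsilon)})|\nabla u^{(\epsilon)}|^2\phi\,dt\,dx\ge 0$, exactly analogous to the classical viscous dissipation $\epsilon\int\eta''(u^\epsilon)(u^\epsilon_x)^2\phi$ arising in the vanishing-viscosity proof of the entropy condition, while the remaining orthogonal component is absorbed by a Young-type inequality against $\|\nabla v^{(\epsilon)}\|_{L^2}=O(\epsilon)$. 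This is precisely where the singular penalization $\tfrac{\epsilon^2}{2}\int(u_t^2+u_x^2)$ built into $E_\epsilon$ earns its keep, mimicking the viscous Laplacian in the vanishing-viscosity method and thereby selecting the unique entropy-admissible $u$.
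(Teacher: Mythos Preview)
Your two starting identities are correct, but you miss the decisive algebraic fact that the paper extracts from them: comparing the Euler--Lagrange equation
\[
\int_\Omega[\epsilon^2(u^{(\epsilon)}_tw_t+u^{(\epsilon)}_xw_x)+(v^{(\epsilon)}_t+f'(u^{(\epsilon)})v^{(\epsilon)}_x)w]\,dt\,dx=0
\]
with the defect identity after one integration by parts,
\[
\int_\Omega[v^{(\epsilon)}_tw_t+v^{(\epsilon)}_xw_x+(u^{(\epsilon)}_t+f'(u^{(\epsilon)})u^{(\epsilon)}_x)w]\,dt\,dx=0,
\]
shows (by uniqueness for the Dirichlet problem) that $v^{(\epsilon)}=\epsilon\,u^{(\epsilon)}$ \emph{exactly}. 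This single identity collapses the whole problem: substituting back, $u^{(\epsilon)}$ solves the elliptic regularization
\[
-\epsilon(u^{(\epsilon)}_{tt}+u^{(\epsilon)}_{xx})+u^{(\epsilon)}_t+f(u^{(\epsilon)})_x=0,
\]
and the entropy inequality then follows by the textbook vanishing-viscosity computation: multiply by $\eta'(u^{(\epsilon)})$, write $\epsilon\,\eta'(u^{(\epsilon)})\Delta u^{(\epsilon)}=\epsilon\,\Delta\eta(u^{(\epsilon)})-\epsilon\,\eta''(u^{(\epsilon)})|\nabla u^{(\epsilon)}|^2$, and discard the last term by convexity. In particular, the troublesome cross term $\int\eta''(u^{(\epsilon)})\phi\,\nabla u^{(\epsilon)}\cdot\nabla v^{(\epsilon)}$ you isolate is \emph{identically} $\epsilon\int\eta''(u^{(\epsilon)})\phi\,|\nabla u^{(\epsilon)}|^2\ge 0$; no decomposition or absorption is needed.

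Without the identity $v^{(\epsilon)}=\epsilon u^{(\epsilon)}$, your sign-control step does not close. The characteristic/orthogonal splitting of $\nabla v^{(\epsilon)}$ gives a $\tau$-contribution $\epsilon^2\int\eta''\phi\,(u_t+f'u_x)\,\Delta u^{(\epsilon)}/(1+f'^2)$, and integrating $\Delta u^{(\epsilon)}$ by parts produces terms involving $\eta'''$, $f''$, and mixed second derivatives of $u^{(\epsilon)}$ rather than the clean $\epsilon^2\int\eta''|\nabla u^{(\epsilon)}|^2\phi$ you claim. For the orthogonal remainder, you invoke $\|\nabla v^{(\epsilon)}\|_{L^2}=O(\epsilon)$, but your a priori bound only gives $\|\nabla v^{(\epsilon)}\|_{L^2}=o(1)$; and even granting $O(\epsilon)$, a Young inequality against $\|\nabla u^{(\epsilon)}\|_{L^2}$, which is merely $o(\epsilon^{-1})$, does not force the product to vanish or to have a sign. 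So the ``critical and hardest step'' in your outline is precisely where the argument has a genuine gap, and the remedy is the explicit relation $v^{(\epsilon)}=\epsilon u^{(\epsilon)}$ that the paper observes.
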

\begin{proof}
Let us look at optimality. If $u^{(\epsilon)}$ is a minimizer of $E_\epsilon$ over $H^1(\Omega)$ under the initial condition $u_0(x)$, then for arbitrary $w\in H^1_0(\Omega)$, we will have, according to our previous calculations concerning $E'(u)$ and the extra minus sign, 
\begin{equation}\label{optim}
\int_\Omega [\epsilon^2(u^{(\epsilon)}_tw_t+u^{(\epsilon)}_xw_x)+(v^{(\epsilon)}_t+f'(u^{(\epsilon)})v^{(\epsilon)}_x)w]\,dt\,dx=0,
\end{equation}
where $v^{(\epsilon)}$ is the corresponding defect, and so
$$
\int_\Omega[ (u^{(\epsilon)}-v^{(\epsilon)}_t)w_t+(f(u^{(\epsilon)})-v^{(\epsilon)}_x)w_x]\,dt\,dx+\int_{\R} u_0(x)w(0, x)\,dx=0
$$
for all $w\in H^1(\Omega)$. Since $u^{(\epsilon)}$ belongs to $H^1(\Omega)$, and if we take $w\in H^1_0(\Omega)$ to discard the integral for $t=0$, we can perform an integration by parts in some of the terms, and find that
\begin{equation}\label{compar}
\int_\Omega [u^{(\epsilon)}_tw+v^{(\epsilon)}_tw_t+f'(u^{(\epsilon)})u^{(\epsilon)}_xw+v^{(\epsilon)}_xw_x]\,dt\,dx=0.
\end{equation}
Comparison of \eqref{optim} and \eqref{compar}, due to the arbitrariness of $w\in H^1_0(\Omega)$, leads to $v^{(\epsilon)}=\epsilon u^{(\epsilon)}$, and
$$
\int_\Omega [\epsilon(u^{(\epsilon)}_tw_t+u^{(\epsilon)}_xw_x)+(u^{(\epsilon)}_t+f'(u^{(\epsilon)})u^{(\epsilon)}_x)w]\,dt\,dx=0.
$$
The minimizer $u^{(\epsilon)}$ is then a solution of
\begin{equation}\label{visc}
-\epsilon(u^{(\epsilon)}_{tt}+u^{(\epsilon)}_{xx})+u^{(\epsilon)}_t+f(u^{(\epsilon)})_x=0\hbox{ in }\Omega,
\end{equation}
with $u^{(\epsilon)}(0, x)=u_0(x)$. It is then standard, despite having a second time derivative in the approximating equation (see \cite{EvansB} for instance), to check that the limit $u$ is an entropy solution of the problem. Indeed, take an entropy pair $(\phi, \psi)$
$$
\phi'(z)f'(z)=\psi'(z),\quad \phi(z),\hbox{ convex}.
$$
Then standard calculations, taking advantage of \eqref{visc}, yield
\begin{align}
\phi(u^{(\epsilon)})_t+\psi(u^{(\epsilon)})_x=&\epsilon\phi'(u^{(\epsilon)})(u_{tt}^{(\epsilon)}+u_{xx}^{(\epsilon)})\nonumber\\
=&\epsilon[\phi(u^{(\epsilon)})_{tt}+\phi(u^{(\epsilon)})_{xx}]\nonumber\\
&-\epsilon[\phi''(u^{(\epsilon)})(u_t^{(\epsilon)})^2+\phi''(u^{(\epsilon)})(u_x^{(\epsilon)})^2].\nonumber
\end{align}
The last term is non-positive because $\phi$ is convex, and hence if $w$ is a smooth, compactly-supported, non-negative test function, we find
$$
\int_\Omega w[\phi(u^{(\epsilon)})_t+\psi(u^{(\epsilon)})_x]\,dt\,dx\ge -\epsilon\int_\Omega \phi(u^{(\epsilon)})(v_{tt}+v_{xx})\,dt\,dx.
$$
As $\epsilon\searrow0$, we obtain the entropy inequality. 
This is standard.
\end{proof}

Our discussion here about the entropy condition supports the fact that the approximated solution through the scheme of the preceding subsection will always, regardless of the initial guess $u^{(0)}(t, x)$, converge to the true entropy condition. This is so simply because the discretization of the problem always introduces a small numerical viscosity which mimics the singularly perturbed functional $E_\epsilon$ in \eqref{perturbacion1}.


\end{document}